\renewcommand{\ker}{\operatorname{Ker}}
\newcommand{\ad}{\mathop{Ad}}
\newcommand{\righttext}[1]{\qquad\text{#1 }}
\newcommand{\midtext}[1]{\qquad\text{#1 }\qquad}
\DeclarePairedDelimiterX{\norm}[1]{\lVert}{\rVert}{#1}
\newtheorem{theorem}{Theorem}[section]
\newtheorem{lemma}[theorem]{Lemma}
\newtheorem{proposition}[theorem]{Proposition}
\newtheorem{corollary}[theorem]{Corollary}
\newtheorem*{theorem*}{Theorem}
\theoremstyle{remark}
\newtheorem{remark}[theorem]{Remark}
\newtheorem{definition}[theorem]{Definition}
\newtheorem{example}[theorem]{Example}
\newcommand{\fun}{\mathcal E}
\newcommand{\<}{\langle}
\renewcommand{\>}{\rangle}
\numberwithin{equation}{section}
\newcommand{\tn}{\textnormal}
\newcommand{\C}{$C^*$}
\newcommand{\AXB}{$\pre AX_B$}
\newcommand{\AXA}{$\pre AX_A$}
\newcommand{\AYB}{$\pre AY_B$}
\newcommand{\BYB}{$\pre BY_B$}
\newcommand{\BYC}{$\pre BY_C$}
\newcommand{\CZC}{$\pre CZ_C$}
\newcommand{\KK}{\mathcal K}
\newcommand{\LL}{\mathcal L}
\newcommand{\ot}{\otimes}
\newcommand{\ots}{\ot_A}
\newcommand{\otss}{\ot_B}
\newcommand{\otsk}{\ot_K}
\newcommand{\otsc}{\ot_C}
\newcommand{\otsi}{\ot_I}
\newcommand{\otsai}{\ot_{A/I}}
\newcommand{\ec}{\emph{C}}
\newcommand{\cat}{\ensuremath{\mathsf{C^*cor_{pair}}}}
\newcommand{\enchilada}{\ensuremath{\mathsf{C^*alg_{cor}}}}
\newcommand{\cats}{\ensuremath{\mathsf{C^*cor_{pair}^{reg}}}}
\newcommand{\ibm}{imprimitivity bimodule}
\newcommand{\otsbb}{\otimes_{B_M}}
\let\original@footnotemark\footnotemark
\newcommand{\align@footnotemark}{%
  \ifmeasuring@
    \chardef\@tempfn=\value{footnote}%
    \original@footnotemark
    \setcounter{footnote}{\@tempfn}%
  \else
    \iffirstchoice@
      \original@footnotemark
    \fi
  \fi}
\pretocmd{\start@align}{\let\footnotemark\align@footnotemark}{}{}
\newcommand{\AMB}{$\pre AM_B$}
\newcommand{\ANB}{$\pre AN_B$}
\newcommand{\AAA}{$\pre AA_A$}
\newcommand{\BNC}{$\pre BN_C$}
\newcommand{\KKA}{$\pre KK_A$}
\newcommand{\CTK}{$\pre CT_K$}
\newcommand{\CNA}{$\pre CN_A$}
\newcommand{\KX}{$\pre K{KX}_K$}
\newcommand{\AMMB}{$\pre AM'_B$}
\newcommand{\UMM}{$U_{M'}$}
\newcommand{\otsmm}{\ot_{B/B_M}}
\newcommand{\AI}{$\pre A(A/I)_{A/I}$}
\newcommand{\MI}{$\pre {A/I}M_B$}
\newcommand{\NI}{$\pre {A/I}N_B$}
\newcommand{\BM}{$\pre B(B/B_M)_{B/B_M}$}
\newcommand{\YBM}{$\pre {B/B_M}(Y/YB_M)_{B/B_M}$}
\newcommand{\otsm}{\otimes_{B_M}}
\newcommand{\OX}{\mathcal{O}_X}
\newcommand{\OY}{\mathcal{O}_Y}
\newcommand{\OZ}{\mathcal{O}_Z}
\newcommand{\OXXI}{\mathcal{O}_{X/XI}}
\newcommand{\OIX}{\mathcal{O}_{IX}}
\newcommand{\otsp}{\otimes_{A/I}}
\newcommand{\oba}{\pre {A}}
\newcommand{\gae}{\lower 2pt \hbox{$\, \buildrel {\scriptstyle >}\over {\scriptstyle
\sim}\,$}}
\newcommand{\lae}{\lower 2pt \hbox{$\, \buildrel {\scriptstyle <}\over {\scriptstyle
\sim}\,$}}
\newcommand{\MU}[1]{
\setbox0\hbox{$#1$}
\setbox1\hbox{$W$}
\ifdim\wd0>\wd1 #1^{\sim} \else \widetilde{#1} \fi
}
\newcommand{\pre}[1]{{}_{#1}}
\begin{document}
\title[Exactness of the Cuntz-Pimsner Construction]{Exactness of the Cuntz-Pimsner Construction}

\author{Menev\c se Ery\"uzl\"u Paulovicks 
}

\address{Department of Mathematics, University of Colorado Boulder, Boulder, CO 80309-0395}
\email{menevse.paulovicks@gmail.com}




\date{\today}

\subjclass[2020]{Primary 46L08; Secondary 18B99}

\keywords{$C^*$-correspondence, Cuntz-Pimsner algebra, exact sequence, exact functor}

\begin{abstract}
In prior work we described how the Cuntz-Pimsner construction may be viewed as a functor. 
The domain of this functor is a category whose objects are \C-correspondences 
and morphisms are isomorphism classes of certain pairs comprised of a \C-correspondence and 
an isomorphism.   The codomain is the well-studied category whose objects are \C-algebras 
and morphisms are isomorphism classes of \C-correspondences. In this paper we show that certain fundamental results in the theory of Cuntz-Pimsner algebras are direct consequences of the functoriality of the Cuntz-Pimsner construction. In addition, we describe exact sequences in the target and domain categories, and prove that the Cuntz-Pimsner functor is exact.

\end{abstract}

\maketitle

\section{Introduction}
In  \cite{enchcor} we introduced a categorical framework for viewing the Cuntz-Pimsner construction as a functor, which thereby allows one to determine relationships between Cuntz-Pimsner algebras from relationships between the defining $C^*$-correspondences. The domain of this functor is the category \cat , introduced in \cite{enchcor}, that has \C-correspondences as objects, and a morphism from \AXA\ to \BYB\ is the isomorphism class of the pair (\AMB, $U_M$), where \AMB\ is a \C-correspondence satisfying certain conditions, and  \[U_M: \pre A(X\ots M)_B \rightarrow \pre A(M\otss Y)_B\] is a \C-correspondence isomorphism. The codomain category \enchilada , which has sometimes been called the Enchilada Category in the literature, has \C-algebras as objects and isomorphism classes of \C-correspondences as morphisms. For any pair (\AMB, $U_M$) one can construct a covariant representation ($\pi,\Phi$) of \AXA\ on \mbox{$\KK(M\otss \OY)$}. Then the universal property of Cuntz-Pimsner algebras assures the existence of an associated homomorphism \mbox{$\sigma_ {(\pi,\Phi)}: \OX\rightarrow \KK(M\otss\OY),$} which allows us to view $M\otss\OY$ as an $\OX$--$\OY$-correspondence.  It is shown in \cite{enchcor} that there exists a functor $\fun$ from \cat\ to \enchilada\ that maps a  \C-correspondence \AXA\ to its  Cuntz-Pimsner algebra $\OX$,  and a morphisms from \AXA $\rightarrow$ \BYB\ is mapped to the isomorphism class of  an $\OX$--$\OY$-correspondence. The first part of this paper is devoted to using this functor to obtain  some well-known Cuntz-Pimsner algebra  results. Specifically, if ($\Upsilon,t$) is a universal covariant representation of \AXA, and $I$ is a positive $X$-invariant ideal of $A$, then the following hold:

\begin{enumerate}[(i)]

\item $\mathcal{O}_{IX}$ is isomorphic to the smallest hereditary subalgebra of $\OX$ containing $\Upsilon(I)$.
\item $\mathcal{O}_{IX}$ is Morita equivalent to the ideal $\left\<\Upsilon(I)\right\>$ generated by $\Upsilon(I)$ in $\OX$.
\item If  \AXA\ is regular and $I$ is an $X$-invariant ideal then $\OX/\<\Upsilon(I)\>\cong \OXXI.$

\end{enumerate}

Items (i) and (ii) were proven by Katsura in \cite[Proposition~9.3 and Proposition~9.5]{katsura} by using what are called \emph{O}-pairs. Item (iii) can be shown by combining \cite[Lemma~6.3]{MT} and \cite[Theorem~3.1]{fowler}. Item (iii) can also be  deduced by combining \cite[Proposition~5.3]{katsura} and  \cite[Proposition~8.5]{katsura}; however, this requires a deep understanding of \emph{O}-pairs and the properties of \C-algebras generated by such pairs. The first objective of this paper is to show that the functor established in \cite{enchcor} can be applied to obtain easier and more direct proofs of these three results.  

The work that is presented in the second part of this paper was motivated by a question frequently asked by audience members when presenting the results of [3], namely: ``Can one define exact sequences in the domain and codomain categories so that the Cuntz-Pimsner functor $\fun$  is exact?''  In order to answer this question, one needs to know what is meant by an exact sequence in both the domain and codomain categories. One of the difficulties in answering this question is that one can not identify images in either categories, and it is suspected that images may not exist in either  categories.  Therefore the usual ``kernel-image definition'' of exactness may not apply. To circumvent this obstruction in each category, we define \mbox{$0 \rightarrow A \xrightarrow{f} B \xrightarrow{g} C \to 0$}  to be a short exact sequence when $f$ is a categorical kernel of $g$ and $g$ is a categorical cokernel of $f$ (see Definition~\ref{def-ench} and Definition~\ref{def-eccor}). We characterize this ``kernel-cokernel definition'' of exactness in each category, showing that kernel-cokernel exactness is a tractable concept in these categories.  We prove that with the kernel-cokernel definition of short exact sequence, the Cuntz-Pimsner functor is exact. We end the paper by showing that as an immediate consequence of exactness one can obtain the results (i),(ii), and (iii) listed above for the case when \AXA\ is regular.

\section{Preliminaries}

Let $\mathcal{C}$ be a pointed category (a category with a zero object), and let $f : A \rightarrow B$ be a morphism. A \emph{kernel} of $f$ is a pair $(K,k)$ consists of an object $K$ and a morphism  $k : K\rightarrow A$ such that

\begin{itemize}
\item $f\circ k=0$;
\item  whenever a morphism $h:D\rightarrow A$ satisfies $f\circ k=0$ there exists a unique morphism $p:D\rightarrow K$ such that $k\circ p =h$. 
\end{itemize}

A \emph{cokernel} of $f$ is a pair $(C,c)$ consists of an object $C$ and a morphism  $c: B\rightarrow C$ such that 
\begin{itemize}
\item $c\circ f =0;$
\item whenever a morphism $h: B\rightarrow D$ satisfies $h\circ f=0$  there exists a unique morphism $p:C\rightarrow D$ such that $p\circ c=h.$ 
\end{itemize}

 We say that $f$ is a \emph{monomorphism}  if for all morphisms $g: C \to A$ and $h : C \to A$ in $\mathcal{C}$, we have $f \circ g = f \circ h$ implies $g=h$.   We say that $f$ is an \emph{epimorphism} if for all morphisms $g: B \to C$ and $h : B \to C$ in $\mathcal{C}$, we have $g\circ f = h \circ f$ implies $g=h$.  

A \C-correspondence \AXB\ is a right Hilbert $B$-module equipped with a left action given by a homomorphism $\varphi_X: A\rightarrow \LL(X),$ where $\LL(X)$ denotes the \C-algebra of adjointable operators on $X$. We denote the kernel of the left action homomorphism $\varphi_X$ by $\ker\varphi_X$. For a \C-correspondence \AXB\ define \mbox{$A\cdot X = \{\varphi_X(a)x: a\in A, x\in X \}$.}  The correspondence \AXB\ is called \emph{nondegenerate} if $A\cdot X = X$. In this paper \emph{all our correspondences will be nondegenerate by standing hypothesis.} 
A \C-correspondence \AXB\ is called \emph{injective} if the left action $\varphi_X: A\rightarrow \LL(X)$ is injective; it is called \emph{proper} if  $\varphi_{X}(A)$ is contained in the \C-algebra $\KK(X)$ of compact operators on $X$. A \C-correspondence \AXB\ is called \emph{regular} if it is both injective and proper.  For a \C-correspondence \AXB\ we denote the closed span of $B$-valued inner products $\<X,X\>_B$ by $B_X$.  One of the \C-correspondence properties we use frequently in this paper is the following: let $I$ be an ideal of $B$ such that $B_X\subset I$. Then, $X$ can be viewed as an $A$--$I$-correspondence \cite[Lemma~3.2]{ench}.

A \C-correspondence isomorphism from \AXB\ to \AYB\  is a bijective linear map $\Phi: X\rightarrow Y$  satisfying
\begin{enumerate}[(i)]
\item $\Phi(a\cdot x)=a \cdot \Phi(x)$,
\item $ \<x,z\>_B = \<\Phi(x), \Phi(z)\>_{B}$,
\end{enumerate}
for all $a\in A$, and $x,z\in X.$ We let $\ad\Phi: \LL(X)\rightarrow \LL(Y)$ denote the associated \C-algebra isomorphism defined by $\ad\Phi(T)=\Phi\circ T\circ \Phi^{-1}.$



The \emph{balanced tensor product} $X\otimes_BY$ of
an $A-B$ correspondence $X$ and a $B-C$ correspondence $Y$ is
formed as follows:
the algebraic tensor product $X\odot Y$
is a pre-correspondence with the $A-C$ bimodule structure satisfying
\[
a(x\otimes y)c=ax\otimes yc
\righttext{for}a\in A,x\in X,y\in Y,c\in C,
\]
and the unique $C$-valued semi-inner product whose values on elementary tensors are given by
\[
\<x\otimes y,u\otimes v\>_C=\<y,\<x,u\>_B\cdot v\>_C
\righttext{for}x,u\in X,y,v\in Y.
\]

This semi-inner product defines a $C$-valued inner product on the quotient $X{\odot}_BY$ of $X\odot Y$ by the subspace generated by elements of form 
\[ x\cdot b \otimes y - x\otimes \varphi_Y(b)y \righttext{($x\in X$, $y\in Y$, $b\in B$)}.\]
The completion $X\otimes_B Y$ of  $X{\odot}_BY$ with respect to the norm coming from the $C$-valued inner product is an $A-B$ correspondence, where the left action is given by 
 \[A\rightarrow \LL(X\otss Y),  \righttext{$a\mapsto \varphi_X(a)\ot 1_Y,$}\]
for $a\in A.$ We denote the canonical image of $x\otimes y$ in $X\otss Y$ by $x\otss y$. 

\begin{proposition}\tn{\cite[Proposition~3.1]{ench}}\label{zero} For \C-correspondences \AXB\ and \BYC\ we have 
\[ \pre A(X\otss Y)_B \cong \pre A0_B \iff \pre A(X\otss Y)_B=\pre A0_B \iff  B_X\subset \ker\varphi_Y. \]
\end{proposition}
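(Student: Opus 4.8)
The plan is to organize the three conditions as a short cycle of implications. Writing $Z=X\otss Y$, I first observe that the equivalence $Z\cong\pre A0\iff Z=\pre A0$ is essentially formal: a $C^*$-correspondence isomorphism is in particular a bijection, and the only module admitting a bijection onto the one-point module $\{0\}$ is $\{0\}$ itself (a linear bijection sends $0$ to $0$, forcing $Z$ to have a single element). So the only genuine content is the equivalence $Z=\pre A0\iff B_X\subset\ker\varphi_Y$, and I would assemble the full statement by chaining $[Z\cong\pre A0]\Rightarrow[Z=\pre A0]\Rightarrow[B_X\subset\ker\varphi_Y]\Rightarrow[Z=\pre A0]\Rightarrow[Z\cong\pre A0]$.

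For the implication $B_X\subset\ker\varphi_Y\Rightarrow Z=\pre A0$ I would compute directly on elementary tensors. By the construction of the balanced tensor product, $\langle x\otss y,\,u\otss v\rangle_C=\langle y,\,\varphi_Y(\langle x,u\rangle_B)v\rangle_C$, and since $\langle x,u\rangle_B\in B_X\subset\ker\varphi_Y$ the operator $\varphi_Y(\langle x,u\rangle_B)$ vanishes, so every such inner product is $0$. By sesquilinearity the $C$-valued semi-inner product is then identically zero on $X\odot Y$, so the quotient-and-completion procedure yields the zero space; thus $Z=\pre A0$.

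For the converse $Z=\pre A0\Rightarrow B_X\subset\ker\varphi_Y$ I would argue that if the completion is trivial then every elementary tensor has length zero, i.e. $\langle x\otss y,\,x\otss y\rangle_C=0$ for all $x\in X$, $y\in Y$. Applying the Cauchy--Schwarz inequality valid for any positive semidefinite $C^*$-valued form, it follows that $\langle x\otss y,\,u\otss v\rangle_C=0$ for all elementary tensors as well. Fixing $x,u\in X$ and setting $T=\varphi_Y(\langle x,u\rangle_B)\in\LL(Y)$, this says $\langle y,Tv\rangle_C=0$ for all $y,v\in Y$; taking $y=Tv$ gives $\langle Tv,Tv\rangle_C=0$, so $Tv=0$ for every $v$ and hence $T=0$. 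Therefore $\varphi_Y(\langle x,u\rangle_B)=0$ for all $x,u\in X$, and because $B_X=\clspn\langle X,X\rangle_B$ while $\varphi_Y$ is a continuous homomorphism, I conclude $B_X\subset\ker\varphi_Y$.

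The step I expect to be the main obstacle is the passage, within the converse direction, from the purely diagonal information $\langle x\otss y,\,x\otss y\rangle_C=0$ to the vanishing of the off-diagonal forms and ultimately of the operators $\varphi_Y(\langle x,u\rangle_B)$. The care here is twofold: one must invoke Cauchy--Schwarz for a merely \emph{semidefinite} $C$-valued form, before any quotient or completion has been taken, and one must then use faithfulness of the $C$-valued inner product on $Y$ --- via the substitution $y=Tv$ --- to upgrade the scalar-level vanishing to operator-level vanishing. Once these two points are secured, continuity of $\varphi_Y$ together with density of $\langle X,X\rangle_B$ in $B_X$ finishes the argument routinely.
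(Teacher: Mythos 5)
Your proof is correct. Note, however, that the paper itself contains no proof of this statement: Proposition~\ref{zero} is quoted, with attribution, from \cite[Proposition~3.1]{ench}, so the only comparison possible is with that cited source, and your argument is in essence the same standard computation used there --- everything rests on the identity $\<x\otss y,\,u\otss v\>_C=\<y,\varphi_Y(\<x,u\>_B)v\>_C$ on elementary tensors. Two small simplifications are worth pointing out. In the converse direction you do not actually need Cauchy--Schwarz for semidefinite forms: diagonal tensors suffice, since $0=\<x\otss y,\,x\otss y\>_C=\<Sy,Sy\>_C$ with $S=\varphi_Y(\<x,x\>_B)^{1/2}\ge 0$ forces $Sy=0$ for all $y$, hence $\varphi_Y(\<x,x\>_B)=0$ for every $x\in X$, and polarization together with continuity of $\varphi_Y$ then yields $B_X\subset\ker\varphi_Y$. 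In the forward direction you can likewise bypass the inner product altogether: by Cohen--Hewitt factorization every $x\in X$ can be written as $x'\cdot b$ with $b\in B_X$, so $x\otss y=x'\otss\varphi_Y(b)y=0$ directly from the balancing relation, which kills the elementary tensors already at the level of the quotient. Neither change is necessary; your argument as written is sound.
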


\begin{lemma}[\cite{fowler}]\label{fowler} Let $X$ be a \C-correspondence over $A$ and let $\pre AY_B$ be an injective \C-correspondence. Then the map 
$\iota: T \mapsto T\ot 1_Y$  gives an isometric homomorphism of $\LL(X)$ into $\LL(X\ots Y).$ If, in addition, $\varphi_Y(A)\subset\KK(Y)$, then $\iota$ embeds $\KK(X)$ into $\KK(X\ots Y).$ 
\end{lemma}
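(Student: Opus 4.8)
The plan is to verify in turn the three properties asserted of $\iota$: that it is a well-defined $*$-homomorphism, that it is isometric, and that it carries compacts to compacts under the properness hypothesis. First I would define $\iota(T)=T\ots 1_Y$ on elementary tensors by $(T\ots 1_Y)(x\ots y)=Tx\ots y$. Since every $T\in\LL(X)$ is a right $A$-module map, $T(x\cdot a)=(Tx)\cdot a$, so $\iota(T)$ annihilates the balancing relations $x\cdot a\ot y-x\ot\varphi_Y(a)y$ and hence descends to $X{\odot}_A Y$. A direct inner-product computation, using $\<Tx,u\>_A=\<x,T^*u\>_A$ and the formula for the $B$-valued inner product on the tensor product, shows that $T^*\ots 1_Y$ is a formal adjoint of $T\ots 1_Y$ on $X{\odot}_A Y$, and that $\<(T\ots 1_Y)\xi,(T\ots 1_Y)\xi\>_B\le\|T\|^2\<\xi,\xi\>_B$; the inequality comes from positivity of the matrix $\big(\<x_i,(\|T\|^2-T^*T)x_j\>_A\big)$ pushed through the $*$-homomorphism $\varphi_Y$. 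Thus $T\ots 1_Y$ is bounded, extends to $X\ots Y$, and is adjointable with $(T\ots 1_Y)^*=T^*\ots 1_Y$. Linearity and multiplicativity being clear on elementary tensors, $\iota$ is a $*$-homomorphism.

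To obtain that $\iota$ is isometric it suffices to show it is injective, since an injective $*$-homomorphism of $C^*$-algebras is isometric; this is precisely where the injectivity of $Y$ enters. Suppose $\iota(T)=0$, so $Tx\ots y=0$ for all $x\in X$, $y\in Y$. Writing $a:=\<Tx,Tx\>_A\ge 0$, we get $0=\<y,\varphi_Y(a)y\>_B=\<\varphi_Y(a^{1/2})y,\varphi_Y(a^{1/2})y\>_B$ for every $y$, forcing $\varphi_Y(a^{1/2})=0$. Injectivity of $\varphi_Y$ then gives $a^{1/2}=0$, hence $a=0$, i.e. $Tx=0$ for all $x$, so $T=0$.

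For the last claim assume $\varphi_Y(A)\subseteq\KK(Y)$; it is enough to treat the rank-one generators $\theta_{x,x'}$ of $\KK(X)$, where $\theta_{x,x'}(z)=x\<x',z\>_A$. For each $x\in X$ introduce the adjointable creation operator $T_x\in\LL(Y,X\ots Y)$, $T_x(y)=x\ots y$, whose adjoint is $T_x^*(z\ots y)=\varphi_Y(\<x,z\>_A)y$. A short computation gives the factorization $\theta_{x,x'}\ots 1_Y=T_xT_{x'}^*$ together with $T_x^*T_x=\varphi_Y(\<x,x\>_A)$, which lies in $\KK(Y)$ by properness. The key input is the standard Hilbert-module fact that $V\in\LL(E,F)$ with $V^*V\in\KK(E)$ forces $V\in\KK(E,F)$: with $a=V^*V$ one checks $V=\lim_{\varepsilon\to 0}V\,a(a+\varepsilon)^{-1}$ in norm, the estimate $\|V-V\,a(a+\varepsilon)^{-1}\|^2=\|\varepsilon^2 a(a+\varepsilon)^{-2}\|=\sup_{t\ge 0}\varepsilon^2 t(t+\varepsilon)^{-2}\le \varepsilon/4$ coming from continuous functional calculus, while each $V\,a(a+\varepsilon)^{-1}\in V\cdot\KK(E)\subseteq\KK(E,F)$. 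Applying this with $V=T_x$ yields $T_x\in\KK(Y,X\ots Y)$, and since $\KK(Y,X\ots Y)\cdot\LL(X\ots Y,Y)\subseteq\KK(X\ots Y)$, the factorization gives $\theta_{x,x'}\ots 1_Y\in\KK(X\ots Y)$. Hence $\iota(\KK(X))\subseteq\KK(X\ots Y)$, and as $\iota$ is already injective and isometric this exhibits $\KK(X)$ as embedded in $\KK(X\ots Y)$.

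I expect the main obstacle to be this final claim. The first two parts are the routine internal-tensor-product bookkeeping, the one genuine input being injectivity of $\varphi_Y$ to upgrade $\iota$ from contractive to isometric. By contrast, compactness requires locating the right factorization $\theta_{x,x'}\ots 1_Y=T_xT_{x'}^*$ through the creation operators and invoking the ``$V^*V$ compact $\Rightarrow V$ compact'' lemma, which is exactly where the properness hypothesis $\varphi_Y(A)\subseteq\KK(Y)$ is used.
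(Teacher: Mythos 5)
Your proposal is correct in all three steps: the bookkeeping that $T\ot 1_Y$ is well defined, adjointable, and contractive; the upgrade to isometry via injectivity of the $*$-homomorphism $\iota$ (forced by injectivity of $\varphi_Y$); and the compactness argument via the creation operators $T_x\in\LL(Y, X\ots Y)$, the factorization $\theta_{x,x'}\ot 1_Y = T_xT_{x'}^*$, and the standard fact that $V^*V\in\KK(E)$ implies $V\in\KK(E,F)$. There is, however, no proof in the paper to compare against: the lemma is quoted from Fowler--Muhly--Raeburn \cite{fowler} and used as a black box. Your argument is essentially the standard one found in that reference (and in Lance's book and Pimsner's original paper), so it fills the citation with a correct, self-contained proof rather than taking a genuinely different route.
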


A \emph{Hilbert bimodule} \AXB\ is a \C-correspondence
that is also equipped with an $A$-valued inner product $\pre A\<\cdot,\cdot\>$,
which satisfies
\[
\pre A\<a\cdot x,y\>=a\cdot \pre A\<x,y\>\midtext{and}\pre A\<x,y\>^*=\pre A\<y,x\>
\]
for all $a\in A,x,y\in X$,
as well as the \emph{compatibility property}
\[
\pre A\<x,y\>\cdot z=x\cdot \<y,z\>_B
\righttext{for}x,y,z\in X.
\]
A Hilbert bimodule $\pre AX_B$ is \emph{left-full} if the closed span of $\pre A\<X,X\>$ is all of $A$. 

An \emph{\ibm} \AXB\ is a Hilbert bimodule  that is full on both the left and the right. The \emph{identity correspondence} on $A$ is the Hilbert bimodule $\pre AA_A$ where the bimodule structure is  given by multiplication, and the inner products are given by 
\[ \oba\<a,b\>=ab^*, \hspace{.5cm} \<a,b\>_A= a^*b, \righttext{for $a,b\in A$}.\]

A \emph{representation} $(\pi,t)$ of a \C-correspondence \AXA\ on a \C-algebra $B$ consists of a $*-$homomorphism $\pi: A\rightarrow B$ and a linear map $t: X\rightarrow B $ such that 
\[
\pi(a)t(x)=t(\varphi_X(a)(x)) \midtext{and} t(x)^* t(y)=\pi(\<x,y\>_A), 
\]
for $a\in A$ and $x, y\in X$, where $\varphi_X$ is the left action homomorphism associated with \AXA.  For any representation $(\pi,t)$ of \AXA\ on $B$, there is an associated homomorphism $\psi_t:\KK(X)\rightarrow B$ satisfying $\psi_t(\theta_{x,x'})=t(x)t(x')^*$ for $x,x'\in X$. The representation $(\pi,t)$ is called \emph{injective} if $\pi$ is injective, in which case $t$ is an isometry. We denote the \C-algebra generated by the images of $\pi$ and $t$ in $B$ by \C$(\pi, t).$

Consider a \C-correspondence \AXA . The ideal $J_X$ is define as
\begin{align*}
 J_X &= \varphi_X^{-1}(\KK(X))\cap (\ker\varphi_X)^{\perp}\\
 &=\text{\{$a\in A$ : $\varphi_X(a)\in\KK(X)$ and $ab=0$ for all $b\in\ker\varphi_X$}\},
\end{align*}
and is called the \emph{Katsura ideal}. Notice here that for a regular  \AXA\ we have  $J_X=A.$

A representation $(\pi, t)$ of \AXA\ is called \emph{covariant} if $\pi(a)=\Psi_{t} (\varphi_X(a) )$, for all $a\in J_X .$  The \C-algebra generated by the universal covariant representation  of \AXA\ is called the \emph{Cuntz-Pimsner} algebra $\OX$ of \AXA .

\section{Categories and the covariant representation}

In this section we briefly explain the construction of the functor $\fun$ defined in \cite{enchcor}, and recall the related categories.  The range category \tn{\enchilada} of  $\fun$ is sometimes called  ``the enchilada category'' as in \cite{ench}. In this category our objects are $C^*$-algebras, and a morphism from $A$ to $B$ is the isomorphism class of  an $A$--$B$-correspondence. The composition  [\BYC]$\circ$[\AXB] is the isomorphism class of the balanced tensor product $\pre A(X\otss Y)_C$; the identity morphism on $A$ is the isomorphism class of the identity correspondence $\pre AA_A$, and the zero morphism $A\rightarrow B$ is $[\pre A0_B].$  Note that a morphism [\AXB] is an isomorphism in \enchilada\ if and only if \AXB\ is  an \ibm\ \cite[Proposition~2.6]{nat}. 

We need the following definition for the domain category. 

\begin{definition}\label{morphiso}\cite[Definition~3.1]{enchcor} For \C-correspondences  \AXA , \BYB , and $A$--$B$-correspondences \AMB, \ANB , let $U_M: X\ots M \rightarrow M\otss Y$ and $U_N: X\ots N \rightarrow N\otss Y$ be  $A$--$B$-correspondence isomorphisms. The pairs (\AMB , $U_M$) and (\ANB, $U_N$) are called \emph{isomorphic} if 
\begin{itemize}
\item there exists an isomorphism  $\xi$: \AMB\ $\rightarrow$ \ANB; and 
\item the diagram
\[
\begin{tikzcd}
X\ots M \arrow{r}{1\ot \xi} \arrow[swap]{d}{U_M} & X\ots N \arrow{d}{U_N} \\
M\otss Y \arrow{r}\arrow{r}{\xi\ot 1_Y} & N\otss Y
\end{tikzcd}
\]
commutes.

\end{itemize}

We denote the isomorphism class of the pair (\AMB, $U_M$) by $[\pre AM_B , U_M]$.

\end{definition}

\begin{remark}\label{notation} For a \C-correspondence \AMB , let $I$ and $J$ be ideals of $A$ and $B$, respectively. We denote the map 
\[M\otss J \rightarrow MJ, \righttext{ $m\otss j \mapsto m\cdot j$}\]
by $\xi_{(r,M,J)}$, where $m\in M$,  $j\in J$. This map defines an $A$--$B$-correspondence isomorphism as well as  an $A$--$J$-correspondence isomorphism. Similarly, we denote 
the map \[I\ots M\rightarrow IM, \righttext{ $i\ots m \mapsto i\cdot m$}\] by $\xi_{(l,M,I)},$ where $m\in M$, $i\in I.$ This map defines  an $A$--$B$-correspondence isomorphism as well as an $I$--$B$-correspondence isomorphism. 
\end{remark}

\begin{theorem}[{\cite[Theorem~3.2]{enchcor}}]
There exists a category \cat\ such that 
\begin{itemize}
\item objects are \C-correspondences \AXA ;
\item morphisms \AXA $\rightarrow$ \BYB\ are isomorphism classes  \tn{[\AMB , $U_M$]}  where $U_M$ denotes an $A$--$B$-correspondence isomorphism $X\ots M \rightarrow M\otss Y$, and \AMB\ is a proper correspondence satisfying $J_X\cdot M \subset M\cdot J_Y$;
\item the composition \tn{[\BNC, $U_N$]$\circ$[\AMB , $U_M$]} is given by the isomorphism class \[\tn{[$\pre A(M\otss N)_C$, $U_{M\otss N}$]} \] 
where $U_{M\otss N}$ denotes the isomorphism $(1_M\ot U_N)(U_M\ot 1_N)$;
\item the identity morphism on \AXA\ is \tn{[\AAA , $U_A$]}, where $U_A$ denotes the isomorphism $\xi_{l,X,A}^{-1}\circ\xi_{r,X,A}: X\ots A \rightarrow A\ots X$.
\end{itemize}
\end{theorem}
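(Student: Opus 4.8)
The plan is to verify directly that the data described satisfy the axioms of a category: that the composition operation is well defined on isomorphism classes, that the composite of two morphisms is again a morphism, that composition is associative, and that the proposed identities are morphisms obeying the unit laws. Throughout, the essential tool is that the balanced tensor product is functorial in each variable and associative up to the canonical natural isomorphisms, so most of the work consists of transporting the isomorphisms $U_M$ through these coherence maps.

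First I would check that the composite $[\pre A(M\otss N)_C, U_{M\otss N}]$ is a legitimate morphism, which requires two things. For the Katsura-ideal condition, starting from $J_X\cdot M\subseteq M\cdot J_Y$ and $J_Y\cdot N\subseteq N\cdot J_Z$ one computes, using the balancing relation $m\cdot b\otss n = m\otss b\cdot n$ for $b\in B$, that
\[ J_X\cdot(M\otss N) = (J_X\cdot M)\otss N\subseteq(M\cdot J_Y)\otss N = M\otss(J_Y\cdot N)\subseteq M\otss(N\cdot J_Z) = (M\otss N)\cdot J_Z, \]
which is the required inclusion. For properness, since $\varphi_{M\otss N}(a)=\varphi_M(a)\ot 1_N$ with $\varphi_M(a)\in\KK(M)$, it suffices to know that $T\ot 1_N\in\KK(M\otss N)$ for every $T\in\KK(M)$; this is the standard fact that a balanced tensor product of proper correspondences is proper, proved by approximating $\theta_{m,m'}\ot 1_N$ in norm by the compacts $\sum_j\theta_{m\otss p_j,\, m'\otss q_j}$ obtained from an approximate unit $\sum_j\theta_{p_j,q_j}$ of $\KK(N)$, the error being controlled uniformly by the module Cauchy--Schwarz inequality together with $\varphi_N(B)\subseteq\KK(N)$ (this is the non-injective analogue of Lemma~\ref{fowler}). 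Finally $U_{M\otss N}=(1_M\ot U_N)(U_M\ot 1_N)$ is a composite of invertible maps, hence an $A$--$C$-correspondence isomorphism $X\ots(M\otss N)\to(M\otss N)\otsc Z$.

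Next, well-definedness on isomorphism classes: if $\xi\colon M\to M'$ and $\eta\colon N\to N'$ implement the equalities $[\pre AM_B,U_M]=[\pre AM'_B,U_{M'}]$ and $[\pre BN_C,U_N]=[\pre BN'_C,U_{N'}]$, then $\xi\ot\eta\colon M\otss N\to M'\otss N'$ is an $A$--$C$-correspondence isomorphism, and the square required of $\xi\ot\eta$ follows by pasting the two given commuting squares along the bifunctoriality of the tensor product. The unit laws are handled by unpacking $U_A$: the right composite $[\pre AM_B,U_M]\circ[\pre AA_A,U_A]$ is the class of $\pre A(A\ots M)_B$, and the map $\xi_{(l,M,A)}\colon A\ots M\to M$ of Remark~\ref{notation} is the isomorphism exhibiting this class as $[\pre AM_B,U_M]$, once one checks via $U_A=\xi_{l,X,A}^{-1}\circ\xi_{r,X,A}$ that the accompanying square commutes; the left unit law uses $\xi_{(r,M,B)}\colon M\otss B\to M$ symmetrically. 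It should also be recorded that $\pre AA_A$ is a morphism: it is proper since $\KK(A)\cong A$, and $J_X\cdot A\subseteq A\cdot J_X$ holds trivially at the level of ideals.

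I expect the main obstacle to be associativity. Here one must compare the two isomorphisms built for a triple composite of $M$ ($A$--$B$), $N$ ($B$--$C$), $P$ ($C$--$D$), namely $U_{(M\otss N)\otsc P}=(1_{M\otss N}\ot U_P)\big((1_M\ot U_N)(U_M\ot 1_N)\ot 1_P\big)$ and $U_{M\otss(N\otsc P)}=(1_M\ot(1_N\ot U_P)(U_N\ot 1_P))(U_M\ot 1_{N\otsc P})$, and show that they agree after conjugation by the canonical associator $(M\otss N)\otsc P\cong M\otss(N\otsc P)$. The verification reduces to the interchange law $(f\ot 1)(1\ot g)=(1\ot g)(f\ot 1)$ for the tensor product together with the coherence of its associativity isomorphisms; the real care lies in confirming that these associators are natural with respect to the maps $U_M$, so that the large diagram decomposes into commuting bifunctoriality and coherence cells. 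This step is routine in principle but is the most bookkeeping-intensive part of the argument.
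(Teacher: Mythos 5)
Your proposal is correct, and it is essentially the intended argument: the paper itself offers no proof of this statement but imports it from \cite[Theorem~3.2]{enchcor}, where the category axioms are verified directly exactly as you outline. Your key ingredients --- closure of morphisms under composition via the chain $J_X\cdot(M\otss N)\subseteq(M\otss N)\cdot J_Z$ and via $\KK(M)\ot 1_N\subseteq\KK(M\otss N)$ when $N$ is proper (where, as you correctly note, no injectivity is needed, unlike in Lemma~\ref{fowler}), well-definedness on classes via $\xi\ot\eta$, the unit laws via $\xi_{(l,M,A)}$ and $\xi_{(r,M,B)}$, and associativity via the canonical associators --- are precisely the steps of that verification.
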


Let [\AMB, $U_M$]: \AXA $\rightarrow$ \BYB\  be a morphism in \cat . Denote the universal covariant representation of \BYB\ by $(\Upsilon, t).$  Let  $V_Y: Y\otss \OY \rightarrow \overline{t(Y)\OY}$ be the isomorphism determined on elementary tensors by
\[ V_Y(y\otss S)=t(y)S\]
for $y\in Y$, $S\in\OY$. Define  $T: X\rightarrow \LL(M,M\otss Y)$  by 
\[ T(x)(m)= U_M(x\ots m),\]
for $x\in X$, $m\in M$. Next, define a linear map $\Phi: X\rightarrow \KK(M\otss \OY)$ by 
\[ \Phi(x) = (1_M\ot V_Y)(T(x)\ot 1_Y).\]  and a homomorphism $\pi: A\rightarrow \KK(M\otss\OY)$ by 
\[ \pi(a)= \varphi_M\ot 1_{\OY}.\] The pair $(\pi, \Phi)$ is a covariant  representation of \AXA\ on $\KK(M\otss\OY)$ \mbox{\cite[Proposition~4.2]{enchcor}}, and it is called  the  \ec-\emph{covariant representation} of \AXA .  It is injective when the homomorphism $\varphi_M$ is. By the universal property of $\OX$ we obtain a $*$-homomorphism 
$\sigma_{(\pi,\Phi)}: \OX\rightarrow \KK(M\otss \OY)$, which provides a left action of $\OX$ on the Hilbert \mbox{$\OY$-module}  $M\otss \OY$ and allows us to view $M\otss\OY$ as a proper $\OX$--$\OY$-correspondence. It is important to note that  the \ec-covariant representation $(\pi, \Phi)$ admits a gauge action. Consequently, the homomorphism $\sigma_{(\pi,\Phi)}$ is an isomorphism onto \C$(\pi, \Phi)$ when \AMB\ is an injective $C^*$-correspondence \cite[Theorem~4.13]{enchcor}.

\begin{theorem}[{\cite[Theorem~5.1]{enchcor}}]

Let \tn{[\AMB, $U_M$]:} \AXA $\rightarrow$ \BYB\  be a morphism in \cat . Then the assignments \AXA $\mapsto \OX$ on objects and \[  [\pre AM_B, U_M] \mapsto [\pre {\OX}(M\otss \OY)_{\OY}] \]
on morphisms define a functor $\fun$ from \cat\ to \enchilada .
\end{theorem}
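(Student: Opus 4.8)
The plan is to verify the three conditions required of a functor: that the morphism assignment is independent of the chosen representative of the isomorphism class, that identity morphisms are sent to identities, and that composition is preserved. The object assignment $\pre AX_A\mapsto\OX$ is unambiguous, and for each morphism $[\pre AM_B,U_M]$ the \ec-covariant representation $(\pi,\Phi)$ together with the induced homomorphism $\sigma_{(\pi,\Phi)}\colon\OX\to\KK(M\otss\OY)$ already exhibits $M\otss\OY$ as a genuine $\OX$--$\OY$-correspondence, so the assignment lands in \enchilada. It therefore remains to check these three structural properties.

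For well-definedness, suppose $[\pre AM_B,U_M]=[\pre AN_B,U_N]$, witnessed by an isomorphism $\xi\colon\pre AM_B\to\pre AN_B$ making the square of Definition~\ref{morphiso} commute. I would show that the Hilbert-module isomorphism $\xi\otss1_{\OY}\colon M\otss\OY\to N\otss\OY$ is in fact an isomorphism of $\OX$--$\OY$-correspondences. Since $\xi$ intertwines the left actions $\varphi_M$ and $\varphi_N$, conjugation by $\xi\otss1_{\OY}$ carries $\pi_M$ to $\pi_N$; and the commuting square of Definition~\ref{morphiso}, which yields $T_N(x)\circ\xi=(\xi\ot1_Y)\circ T_M(x)$, shows it carries $\Phi_M$ to $\Phi_N$. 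Thus $\Ad(\xi\otss1_{\OY})$ sends the \ec-covariant representation associated with $(\pre AM_B,U_M)$ to that associated with $(\pre AN_B,U_N)$, and the universal property of $\OX$ forces $\Ad(\xi\otss1_{\OY})\circ\sigma_{(\pi_M,\Phi_M)}=\sigma_{(\pi_N,\Phi_N)}$, so $\xi\otss1_{\OY}$ intertwines the two $\OX$-actions.

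For the identity, I would compute the \ec-covariant representation attached to $[\pre AA_A,U_A]$, where $U_A=\xi_{l,X,A}^{-1}\circ\xi_{r,X,A}$. Here the resulting correspondence is $A\ots\OX$, which is canonically isomorphic to $\OX$ as a right Hilbert $\OX$-module via $a\ots S\mapsto\Upsilon(a)S$ (the image being dense because $\pi(a)t(x)=t(\varphi_X(a)x)$ and $X$ is nondegenerate). Under this identification I expect $\pi(a)=\varphi_A(a)\ot1_{\OX}$ to become left multiplication by $\Upsilon(a)$ and, using the definitions of $V_X$ and $U_A$, $\Phi(x)$ to become left multiplication by $t(x)$; it then follows that $\sigma_{(\pi,\Phi)}$ is the identity representation of $\OX$ on itself, so $A\ots\OX\cong\pre{\OX}(\OX)_{\OX}$ is the identity morphism in \enchilada.

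The preservation of composition is the substantive step. Given composable morphisms $[\pre AM_B,U_M]\colon\pre AX_A\to\pre BY_B$ and $[\pre BN_C,U_N]\colon\pre BY_B\to\pre CZ_C$, I must identify the $\OX$--$\OZ$-correspondence attached to the composite $[\pre A(M\otss N)_C,U_{M\otss N}]$ with the tensor product $(M\otss\OY)\otimes_{\OY}(N\otsc\OZ)$ coming from \enchilada. On the level of right Hilbert $\OZ$-modules I would build the isomorphism
\[(M\otss\OY)\otimes_{\OY}(N\otsc\OZ)\;\cong\;M\otss\bigl(\OY\otimes_{\OY}(N\otsc\OZ)\bigr)\;\cong\;M\otss(N\otsc\OZ)\;\cong\;(M\otss N)\otsc\OZ\]
by associativity of the balanced tensor product together with the absorption $\OY\otimes_{\OY}(N\otsc\OZ)\cong N\otsc\OZ$, which is valid because the left $\OY$-action $\sigma_{(\pi_N,\Phi_N)}$ restricts on $\Upsilon_Y(B)$ to $\varphi_N(\cdot)\ot1_{\OZ}$ and is therefore nondegenerate. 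Call the resulting isomorphism $\Theta$. The real work is to check that $\Theta$ intertwines the two left $\OX$-actions, namely $\sigma_{(\pi_M,\Phi_M)}\otimes_{\OY}1$ on the left and $\sigma_{(\pi_{M\otss N},\Phi_{M\otss N})}$ on the right. By the universal property of $\OX$ it suffices to verify this on the generators $\Upsilon_X(a)$ and $t_X(x)$. The action of $\Upsilon_X(a)$ is straightforward, as both sides reduce to $\varphi_M(a)$ acting in the $M$-slot. The hard part will be the generator $t_X(x)$: here one must trace $\Phi_{M\otss N}(x)$, built from $U_{M\otss N}=(1_M\ot U_N)(U_M\ot1_N)$ and $V_Z$, through the chain of identifications defining $\Theta$, and reconcile it with $\Phi_M(x)$ acting in the $M\otss\OY$-slot. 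The crucial point is that moving the factor $t_Y(y)\in\OY$ produced by $V_Y$ across the absorption isomorphism turns it precisely into $\Phi_N(y)$, so that the factorization of $U_{M\otss N}$ matches the two-step application of $V_Y$ and $V_Z$; establishing this compatibility on elementary tensors is the main obstacle.
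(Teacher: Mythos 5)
The paper does not actually prove this statement---it is quoted from \cite[Theorem~5.1]{enchcor}---so the only comparison available is with that cited argument, and your outline follows the same route: well-definedness via the isomorphism $\xi\otss 1_{\OY}$, identification of $\pre A(A\ots\OX)_{\OX}\cong\pre{\OX}(\OX)_{\OX}$ for identities, and the associativity/absorption isomorphism $(M\otss\OY)\otimes_{\OY}(N\otsc\OZ)\cong(M\otss N)\otsc\OZ$ checked on the generators $\Upsilon_X(a)$, $t_X(x)$ for composition. All three steps are correct as sketched. One remark: the step you single out as ``the main obstacle'' is in fact automatic, because the absorption isomorphism carries $t_Y(y)S\otimes w$ to $\sigma_{(\pi_N,\Phi_N)}(t_Y(y)S)\,w=\Phi_N(y)\,\sigma_{(\pi_N,\Phi_N)}(S)\,w$, using nothing more than the defining property $\sigma_{(\pi_N,\Phi_N)}\circ t_Y=\Phi_N$ of the induced homomorphism; granting this identity, the elementary-tensor verification for $t_X(x)$ is a routine unwinding of the factorization $U_{M\otss N}=(1_M\ot U_N)(U_M\ot 1_N)$, so your plan does close.
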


Next Proposition is crucial for this paper.

\begin{proposition}\label{kerneliso}
Let $[\pre AM_B, U_M]:$ \AXA $\rightarrow$ \BYB\ be a morphism in \cat , where \AXA\ is a regular correspondence,  and let  $\sigma: \OX\rightarrow\KK(M\otss\OY)$ be the associated homomorphism. Denote  the universal covariant representation of \AXA\ by $(\Upsilon,t)$. Then $\ker\sigma$ is the ideal $\<\Upsilon(\ker\varphi_M)\>$  generated by $\Upsilon(\ker\varphi_M)$ in $\OX.$ 
\end{proposition}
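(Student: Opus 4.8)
The plan is to identify $\ker\sigma$ by exhibiting it as the kernel of a covariant representation of $\AXA$ and then applying the gauge-invariant uniqueness theorem (GIUT) for Cuntz–Pimsner algebras. The map $\sigma = \sigma_{(\pi,\Phi)}$ arises from the \ec-covariant representation $(\pi,\Phi)$ of $\AXA$ on $\KK(M\otss\OY)$, where $\pi(a) = \varphi_M(a)\ot 1_{\OY}$. Since $\AXA$ is regular we have $J_X = A$, so $\sigma$ being a homomorphism out of $\OX$ must kill the ideal generated by $\Upsilon$ of everything that $\pi\circ\Upsilon^{-1}$ cannot detect. The central observation is that $\pi(a) = 0$ precisely when $\varphi_M(a) = 0$, i.e.\ exactly on $\ker\varphi_M$, because $1_{\OY}$ acts faithfully (here one uses that $\OY$, and hence $M\otss\OY$, is nonzero, and that $\varphi_M(a)\ot 1_{\OY} = 0 \iff \varphi_M(a)=0$, which follows since tensoring an adjointable operator with the identity on a nonzero module is isometric). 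Thus $\pi$ factors through $A/\ker\varphi_M$ as a faithful representation.

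\medskip
\noindent
First I would verify the easy containment $\<\Upsilon(\ker\varphi_M)\>\subseteq\ker\sigma$. For $a\in\ker\varphi_M$ we have $\pi(a) = \varphi_M(a)\ot 1_{\OY} = 0$, and since $\sigma\circ\Upsilon = \pi$ (the homomorphism $\sigma$ is built from $(\pi,\Phi)$ via the universal property so that it intertwines the universal representation $(\Upsilon,t)$ with $(\pi,\Phi)$), we get $\sigma(\Upsilon(a)) = \pi(a) = 0$. Because $\ker\sigma$ is an ideal, it contains the whole ideal $\<\Upsilon(\ker\varphi_M)\>$ generated by $\Upsilon(\ker\varphi_M)$.

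\medskip
\noindent
The harder reverse containment $\ker\sigma\subseteq\<\Upsilon(\ker\varphi_M)\>$ is where the gauge action does the work. I would pass to the quotient $q:\OX\to\OX/\<\Upsilon(\ker\varphi_M)\>$ and show that $\sigma$ descends to an \emph{injective} homomorphism $\bar\sigma$ on the quotient, which forces $\ker\sigma = \<\Upsilon(\ker\varphi_M)\>$. To prove $\bar\sigma$ is injective I would invoke the gauge-invariant uniqueness theorem. The excerpt records that the \ec-covariant representation $(\pi,\Phi)$ admits a gauge action, so $\sigma$ is gauge-equivariant; this is the hypothesis GIUT needs. The remaining point is the injectivity condition of GIUT: one must check that the induced representation of the quotient correspondence is \emph{injective} on the image of the coefficient algebra. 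Concretely, the quotient $\OX/\<\Upsilon(\ker\varphi_M)\>$ should be recognized as (or compared to) the Cuntz–Pimsner algebra of the correspondence $\pre{A/\ker\varphi_M}(X/\ker\varphi_M\cdot X)_{A/\ker\varphi_M}$ on which $\varphi_M$ (hence $\pi$) acts injectively; one then checks that $\bar\sigma\circ\bar\Upsilon$ is isometric on $A/\ker\varphi_M$, which is exactly the statement that $\pi$ is faithful modulo $\ker\varphi_M$, established in the first paragraph.

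\medskip
\noindent
\textbf{The main obstacle} I anticipate is making the descent to the quotient rigorous: one must confirm that $\<\Upsilon(\ker\varphi_M)\>$ is a \emph{gauge-invariant} ideal of $\OX$ (so that the quotient carries a residual gauge action and GIUT applies there), and one must correctly identify the quotient $\OX/\<\Upsilon(\ker\varphi_M)\>$ with a genuine Cuntz–Pimsner algebra so that the injectivity hypothesis of GIUT can be verified in the clean form ``$\pi$ is faithful on $A/\ker\varphi_M$.'' Gauge-invariance of $\<\Upsilon(\ker\varphi_M)\>$ follows because it is generated by $\Upsilon(\ker\varphi_M)$ and the gauge action fixes $\Upsilon(A)$ pointwise, so the delicate part is purely the structural identification of the quotient, after which injectivity of $\bar\sigma$ and hence the desired equality of ideals follows.
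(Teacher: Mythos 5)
Your opening observation --- that $\pi(a)=0$ exactly when $\varphi_M(a)=0$ --- is the right engine, but your justification for it is incorrect: it is \emph{not} true that tensoring an adjointable operator with the identity of a nonzero module is isometric. Take $B=\CC^2$, $X=B$ as a Hilbert $B$-module, and $Y=\CC$ with left $B$-action $(b_1,b_2)\cdot\lambda=b_1\lambda$: then $X\otss Y\neq 0$, yet $T\ot 1_Y=0$ for $T=\mathrm{diag}(0,1)\neq 0$. What the implication actually needs is that the left action of $B$ on $\OY$ is \emph{injective}; this holds because the universal covariant representation $\Upsilon_Y\colon B\to\OY$ is injective (which is precisely what Katsura's ideal $J_Y$ is designed to achieve), after which Lemma~\ref{fowler} gives that $T\mapsto T\ot 1_{\OY}$ is isometric. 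This is fixable, but it must be stated correctly, since the same point recurs at the injectivity step of your GIUT argument.

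The serious gap is the step you defer: the identification $\OX/\<\Upsilon(K)\>\cong\mathcal{O}_{X/XK}$, where $K=\ker\varphi_M$. Your whole reverse containment hinges on it, and it is not a routine verification. First, within this paper that identification is Theorem~\ref{quo}, whose proof \emph{uses} Proposition~\ref{kerneliso}; appealing to it here is circular, as the paper's logical order is the reverse of yours. Second, obtaining it from the literature (Fowler--Muhly--Raeburn combined with Muhly--Tomforde, as indicated in the introduction) requires $K$ to be $X$-\emph{invariant}, i.e.\ negatively as well as positively invariant; under the hypotheses of the proposition only \AXA\ is regular while \BYB\ is arbitrary, and Lemma~\ref{inv} yields negative invariance of $\ker\varphi_M$ only when \BYB\ is also regular, so a separate argument is needed (one exists, using the morphism condition $J_X\cdot M\subset M\cdot J_Y$, which for regular \AXA\ gives $M=M\cdot J_Y$; your proposal never addresses this). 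Third, if the quotient were only a \emph{relative} Cuntz--Pimsner algebra, your GIUT step would fail outright: a gauge-equivariant homomorphism that is injective on the coefficient algebra need not be injective on a relative Cuntz--Pimsner algebra (the quotient map from the Toeplitz algebra onto $\OX$ is the standard counterexample). The paper's proof avoids quotients entirely: both $\ker\sigma$ and $\<\Upsilon(K)\>$ are gauge-invariant ideals, and for regular \AXA\ Katsura's result \cite[Corollary~8.7]{katsura} says gauge-invariant ideals of $\OX$ are determined by their intersections with $\Upsilon(A)$; the proposition therefore reduces to checking $\ker\sigma\cap\Upsilon(A)=\<\Upsilon(K)\>\cap\Upsilon(A)$, which is exactly your (corrected) first observation together with the easy containment. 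To salvage your route you would have to prove negative invariance of $K$ and then establish the quotient identification independently of this proposition --- in effect reproving Theorem~\ref{quo} first.
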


\begin{proof}
It suffices to show the equality $\ker\sigma\cap \Upsilon(A) = \<\Upsilon(\ker\varphi_M)\>\cap\Upsilon(A)$, since gauge invariant ideals of $\OX$ are distinguished by their intersection with $\Upsilon(A)$  when \AXA\ is regular \cite[Corollory~8.7]{katsura}. One can easily verify  that $\<\Upsilon(\ker\varphi_M)\>\subset \ker\sigma$. Let  $\Upsilon(a)\in\ker\sigma$. Then we have 
\[ 0=\sigma(\Upsilon(a))=\varphi_M(a)\ot 1_{\OY}.\]
This implies  $\varphi_M(a)=0$ by Lemma~\ref{fowler}. And thus, $a\in\ker\varphi_M$, which means $\Upsilon(a)\in \Upsilon(A)\cap\<\Upsilon(\ker\varphi_M)\>.$
\end{proof}

\section{Invariant Ideals and Structure Theorems}

\begin{definition} Let \AXA\ be a \C-correspondence. For an ideal $I$ of $A$, define an ideal $X^{-1}(I)$ of $A$ by 
\begin{align*}
X^{-1}(I)&= \{ a\in A: \<x,a\cdot y\>_A\in I   \text{ for all }  x,y\in X\}.
\end{align*}
An ideal $I$ of $A$ is said to be \emph{positive $X$-invariant} if $IX\subset XI$, \emph{negative $X$-invariant} if $J_X\cap X^{-1}(I)\subset I,$ and \emph{$X$-invariant} if $I$ is both positive and negative invariant.
\end{definition}

Note that $I$ is a positive $X$-invariant ideal of $A$ if and only if $\<X, IX\>_A\subset I.$  When that's the case, we have $IX=IX\<IX,IX\>_A\subset IXI$. Therefore, we have the equality \mbox{$IX=IXI$}. Consequently, the $I$--$A$-correspondence $IX$ can be viewed as a \C-correspondence over $I$.

\begin{lemma}\label{idealmorph} Let \AXA\ be a $C^*$-correspondence and I be a positive X-invariant ideal of A. Denote the  $I$--$A$-correspondence isomorphism $\xi_{(l,X,I)}^{-1}\circ \xi_{(r,IX,I)}: IX\otsi I \rightarrow I\ots X$ by $U_I$, where $\xi_{(l,X,I)}$ and  $\xi_{(r,IX,I)}$ are the $I$--$A$-correspondence isomorphisms defined as in Remark~\ref{notation}. Then, the isomorphism class $[\pre II_A, U_I]: \pre IIX_I\rightarrow$ \AXA\ is a morphism in \cat . 
\end{lemma}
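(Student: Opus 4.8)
The plan is to verify the three conditions that make $[\pre II_A,U_I]$ a morphism $\pre IIX_I\to$ \AXA\ in \cat: that $U_I$ is an $I$--$A$-correspondence isomorphism $IX\otsi I\to I\ots X$, that $\pre II_A$ is proper, and that the Katsura condition $J_{IX}\cdot I\subset I\cdot J_X$ holds, where $J_{IX}$ is the Katsura ideal of the source correspondence $\pre IIX_I$. The first condition is immediate: by Remark~\ref{notation} both $\xi_{(r,IX,I)}$ and $\xi_{(l,X,I)}$ are $I$--$A$-correspondence isomorphisms with matching (co)domains, so their composite $U_I$ is one as well, and concretely it sends $(i\cdot x)\otsi k\mapsto i\ots(x\cdot k)$. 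This is where positive invariance enters the setup: $IX\subset XI$ is exactly what forces $\<IX,IX\>_A\subset I$ and $IX=IXI$, so that $\pre IIX_I$ really is a correspondence over $I$ and $\xi_{(r,IX,I)}$ lands in $IX$.

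For properness I would show $\varphi_M(I)\subset\KK(I)$ for $M=\pre II_A$, where $I$ carries the right inner product $\<a,b\>_A=a^*b$ and $I$ acts on the left by multiplication. Each rank-one operator $\theta_{a,b}$ ($a,b\in I$) is left multiplication by $ab^*\in I$, and since $\overline{\operatorname{span}}\{ab^*:a,b\in I\}=I$ (using an approximate identity of $I$), the closed span of the rank-one operators is precisely the set of left multiplications by elements of $I$. Hence $\varphi_M(I)=\KK(I)$, so $M$ is proper.

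The main work is the Katsura condition, which I would first reduce. Since $I$ acts on $M$ by multiplication, $J_{IX}\cdot M=\overline{J_{IX}\,I}=J_{IX}$ (as $J_{IX}$ is an ideal of $I$), while $M\cdot J_X=\overline{I\,J_X}=I\cap J_X$. Thus $J_{IX}\cdot M\subset M\cdot J_X$ is equivalent to $J_{IX}\subset I\cap J_X$, and because $J_{IX}\subset I$ automatically, it suffices to prove the single containment $J_{IX}\subset J_X$. I will establish the two defining pieces of $J_X=\varphi_X^{-1}(\KK(X))\cap(\ker\varphi_X)^{\perp}$ separately for $j\in J_{IX}$.

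For the orthogonality piece I would first note $\ker\varphi_{IX}=\ker\varphi_X\cap I$: the inclusion $\supset$ is clear, and for $\subset$, if $b\in I$ annihilates $IX$ then $(be_\lambda)X\subset b(IX)=0$ for an approximate identity $(e_\lambda)$ of $I$, and $be_\lambda\to b$ forces $\varphi_X(b)=0$. Then, given $j\in(\ker\varphi_{IX})^{\perp}$ and $b\in\ker\varphi_X$, the element $bb^*j^*$ lies in $\ker\varphi_X\cap I=\ker\varphi_{IX}$, so $j\,bb^*j^*=0$; hence $(jb)(jb)^*=0$ and $jb=0$, giving $j\in(\ker\varphi_X)^{\perp}$. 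The compactness piece is the step I expect to be the crux, and I would handle it with an approximate-identity factorization through $IX$ rather than a restriction argument. Writing $\varphi_{IX}(j)$ as a norm limit of finite sums $S_n$ of rank-one operators on $IX$, and reading these same rank-ones as operators $S_n\in\KK(X)$, one has $\|(\varphi_X(j)-S_n)\varphi_X(e_\lambda)\|_{\LL(X)}\le\|\varphi_{IX}(j)-S_n|_{IX}\|_{\LL(IX)}$, since $\varphi_X(e_\lambda)$ has range in $IX$ and $\varphi_X(j)$ restricts to $\varphi_{IX}(j)$ there. As $\KK(X)$ is an ideal in $\LL(X)$, each $S_n\varphi_X(e_\lambda)$ is compact, so $\varphi_X(j)\varphi_X(e_\lambda)\in\KK(X)$ for every $\lambda$; and since $\varphi_X(j)\varphi_X(e_\lambda)=\varphi_X(je_\lambda)\to\varphi_X(j)$ in norm, we conclude $\varphi_X(j)\in\KK(X)$. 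Combining the two pieces gives $j\in J_X$, completing the verification. The delicate point is that $IX$ is generally not orthogonally complemented in $X$, so this factorization through $\varphi_X(e_\lambda)$ is the right substitute for a naive projection.
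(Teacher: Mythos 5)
Your proof is correct, and it follows the same reduction as the paper: both arguments boil the morphism condition down to the containment $J_{IX}\cdot I\subset I\cdot J_X$, equivalently $J_{IX}\subset I\cap J_X$. The difference is in how that containment is established. The paper disposes of it in one line by citing Katsura's identity $J_{IX}=I\cap J_X$ (\cite[Proposition~9.2]{katsura}), whereas you prove the needed inclusion from scratch; in effect you have reproved the relevant half of Katsura's proposition. Your two-part verification is sound: the orthogonality part via the identity $\ker\varphi_{IX}=\ker\varphi_X\cap I$ together with the $C^*$-trick that $j\,bb^*j^*=0$ forces $jb=0$, and the compactness part via the estimate $\norm{(\varphi_X(j)-S_n)\varphi_X(e_\lambda)}\le\norm{\varphi_{IX}(j)-S_n|_{IX}}$ combined with the fact that $\KK(X)$ is an ideal in $\LL(X)$ and that $\varphi_X(je_\lambda)\to\varphi_X(j)$ in norm. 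Your closing observation---that $IX$ is in general not orthogonally complemented in $X$, so that factoring through $\varphi_X(e_\lambda)$ is the correct substitute for a naive restriction or projection argument---is exactly the right point of care, and it is the step a careless direct proof would get wrong. You also verify explicitly that $\pre II_A$ is proper and that $U_I$ is an $I$--$A$-correspondence isomorphism, which the paper treats as immediate. What the paper's route buys is brevity and consistency with its systematic reliance on \cite{katsura}; what yours buys is a self-contained lemma that does not depend on Katsura's ideal-structure machinery. One cosmetic remark: your reductions $J_{IX}\cdot I=J_{IX}$ and $\overline{I\,J_X}=I\cap J_X$ silently use approximate identities (or Cohen factorization) for closed ideals; this is standard, but worth a word if the argument is written out in full.
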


\begin{proof}
It suffices to show $J_{IX}\cdot I \subset J_{X}$, which follows immediately from the fact that $J_{IX}=I\cap J_X$ \citep[Proposition~9.2]{katsura}.
\end{proof}

\begin{lemma}\label{compact}  For C*-algebras  $A$ and $B$, let  $A\subset B$. Then we have the \C-algebra isomorphism $\KK(AB)\cong ABA$, where $AB$ is viewed as a Hilbert $B$-module .\end{lemma}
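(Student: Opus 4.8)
The plan is to analyze the structure $AB$ as a Hilbert $B$-module and identify its compact operators with a concrete $C^*$-subalgebra of $B$. Since $A \subset B$, the set $AB = \overline{\text{span}}\{ab : a \in A, b \in B\}$ is a right ideal of $B$ whose closure carries the natural right $B$-module structure with inner product $\langle x, y\rangle_B = x^*y$. The rank-one operators $\theta_{x,y}$ for $x,y \in AB$ act by $\theta_{x,y}(z) = x\langle y, z\rangle_B = x y^* z$, so $\theta_{x,y}$ is left multiplication by the element $xy^* \in AB(AB)^* = ABBA = ABA$. This suggests that the map sending $\theta_{x,y}$ to $xy^*$ should extend to the desired isomorphism.

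First I would define a candidate homomorphism. The compact operators $\KK(AB)$ are by definition the closed linear span of the rank-one operators $\theta_{x,y}$ with $x,y \in AB$. I would set $\Theta(\theta_{x,y}) = xy^*$ and check that this respects the $*$-algebra operations: composition $\theta_{x,y}\theta_{u,v} = \theta_{x\langle y,u\rangle_B, v} = \theta_{xy^*u, v}$ corresponds on the target side to $(xy^*u)v^* = (xy^*)(uv^*)$, matching the product in $B$, and the adjoint $\theta_{x,y}^* = \theta_{y,x}$ corresponds to $yx^* = (xy^*)^*$. Thus $\Theta$ is a $*$-homomorphism from the algebraic span of rank-one operators onto the algebraic span of products $xy^*$, which is dense in $ABA$ (using $A \subset B$ so that $AB \cdot (AB)^* = ABBA = ABA$, where the middle collapse $BB = B$ holds since $B$ is a $C^*$-algebra).

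Next I would verify that $\Theta$ is isometric, which automatically gives injectivity and allows extension to the completions. The cleanest route is to recognize $AB$ as a Hilbert $B$-module whose left multiplication action realizes $\KK(AB)$ inside $B$ itself; more precisely, the representation of $\KK(AB)$ by left multiplication operators on $AB \subset B$ is faithful because $AB$ generates a dense subspace under the module action. Concretely, for a finite sum $S = \sum_i \theta_{x_i, y_i}$, one computes the operator norm of $S$ acting on $AB$ and compares it to the $B$-norm of $\Theta(S) = \sum_i x_i y_i^*$; since both are given by the same $C^*$-norm expressions evaluated on the faithful left-multiplication representation, they coincide. I expect the main obstacle to lie precisely here: establishing that $\Theta$ is isometric rather than merely contractive, i.e.\ ruling out a nontrivial kernel.

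I would resolve this by invoking the standard identification of $\KK(X)$ for a right Hilbert $B$-module $X$ that arises as a closed right ideal: when $X = \overline{JB}$ for a $C^*$-subalgebra (here $X = AB$ with the left ideal generated by $A$), left multiplication gives an isomorphism $\KK(X) \cong \overline{XX^*} = \overline{AB(AB)^*} = ABA$. This is a well-known instance of the fact that for a hereditary-type construction the compacts on a submodule are exactly the corresponding hereditary subalgebra; the $*$-homomorphism $\Theta$ is then seen to be an isomorphism onto $ABA$ because the left regular representation of the $C^*$-algebra $ABA$ on the Hilbert module $AB$ is faithful, forcing $\Theta$ to be isometric. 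Once isometricity is established, $\Theta$ extends to a $*$-isomorphism $\KK(AB) \xrightarrow{\ \cong\ } ABA$, completing the proof.
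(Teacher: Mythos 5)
Your proposal is correct and takes essentially the same route as the paper: the paper defines the inverse of your map, namely $L: ABA \to \KK(AB)$, $x \mapsto T_x$ (left multiplication), and proves it is an injective $*$-homomorphism that is surjective because $\theta_{a_1b_1,a_2b_2} = T_{a_1b_1b_2^*a_2^*}$ — exactly your key identity $\theta_{x,y} = $ left multiplication by $xy^*$, combined with the same faithfulness-of-left-multiplication argument (using $(ABA)^* \subset AB$) and density of $AB(AB)^*$ in $ABA$. The only cosmetic difference is the direction in which the isomorphism is constructed.
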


\begin{proof}
For any $x\in ABA$, consider the operator $T_x: AB \rightarrow AB$ defined by \mbox{$T_x(y)=xy$}, where $y\in AB.$ Then each $T_x$ is an element of $\KK(AB)$, and the map \mbox{$L: ABA\rightarrow \KK(AB)$} defined by $x\mapsto T_x$ is an injective $*$-homomorphism. Now take any $\theta_{a_1b_1,a_2b_2}\in \KK(AB)$.  We have $\theta_{a_1b_1,a_2b_2}=T_{a_1b_1b_2^*a_2^*}=L(a_1b_1b_2^*a_2^*)$. And thus, $L$ is surjective. \end{proof}

\begin{theorem}\label{posinv} Let \AXA\ be a $C^*$-correspondence and I be a positive X-invariant ideal of $A$. Let $(\Upsilon, t)$ be the universal covariant representation of \AXA. Then we have the following:
\begin{enumerate}[\normalfont(1)]
\item $\fun\left([\pre II_A, U_I]\right)=\left[ \pre {\mathcal{O}_{IX}}(I\ots \OX)_{\OX} \right]$ is an isomorphism class of a left-full Hilbert bimodule. 
\item $\mathcal{O}_{IX}$ is isomorphic to the smallest hereditary subalgebra of $\OX$ containing $\Upsilon(I)$ \tn{\cite[Proposition~9.3]{katsura}}.
\item $\mathcal{O}_{IX}$ is Morita equivalent to the ideal $\left\<\Upsilon(I)\right\>$ generated by $\Upsilon(I)$ in $\OX$. \tn{\cite[Proposition~9.5]{katsura}}.
\end{enumerate}
\end{theorem}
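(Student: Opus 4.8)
The plan is to apply the functor $\fun$ to the morphism $[\pre II_A, U_I]$ produced in Lemma~\ref{idealmorph} and then realise the resulting \C-correspondence concretely inside $\OX$. By \cite[Theorem~5.1]{enchcor}, $\fun([\pre II_A, U_I])$ is the class of $\pre{\mathcal{O}_{IX}}(I\ots\OX)_{\OX}$, where the left action is the homomorphism $\sigma:\mathcal{O}_{IX}\to\KK(I\ots\OX)$ coming from the universal property of $\mathcal{O}_{IX}$. First I would identify the right Hilbert $\OX$-module $I\ots\OX$ with $\clspn\Upsilon(I)\OX$ via $i\ots S\mapsto\Upsilon(i)S$; the computation $\<i\ots S, j\ots T\>_{\OX} = S^*\Upsilon(i^*j)T = (\Upsilon(i)S)^*(\Upsilon(j)T)$ shows this map is isometric and has dense range, hence is an isomorphism of right Hilbert modules. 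Because the left action of $\pre II_A$ on itself is injective, $M = \pre II_A$ is an injective correspondence, so \cite[Theorem~4.13]{enchcor} guarantees that $\sigma$ is an isomorphism of $\mathcal{O}_{IX}$ onto the generated \C-algebra \C$(\pi,\Phi)\subseteq\KK(I\ots\OX)$.

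Next I would make the Hilbert-bimodule structure explicit. A right Hilbert $\OX$-module is always a left-full Hilbert bimodule over its algebra of compact operators through $\theta_{\xi,\eta}$, and Lemma~\ref{compact}, applied to the \C-subalgebra $\Upsilon(I)\subseteq\OX$, identifies $\KK(\clspn\Upsilon(I)\OX)$ with the hereditary subalgebra $\clspn\Upsilon(I)\OX\Upsilon(I)$. Under these identifications I would compute the image of $\sigma$ on generators: the relations $t(a\cdot x) = \Upsilon(a)t(x)$ and $t(x\cdot a) = t(x)\Upsilon(a)$ show that $\pi(a)$ acts as left multiplication by $\Upsilon(a)$ for $a\in I$, and that $\Phi(\chi)$ acts as left multiplication by $t(\chi)$ for $\chi\in IX$. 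Thus \C$(\pi,\Phi)$ corresponds to the \C-subalgebra $\mathcal{B}$ of $\clspn\Upsilon(I)\OX\Upsilon(I)$ generated by $\Upsilon(I)$ and $t(IX)$.

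The crux, and the step I expect to be the main obstacle, is to upgrade this to the equality $\mathcal{B} = \clspn\Upsilon(I)\OX\Upsilon(I) = \KK(I\ots\OX)$. The inclusion $\mathcal{B}\subseteq\clspn\Upsilon(I)\OX\Upsilon(I)$ is straightforward once one uses $IX = IXI$ (valid since $I$ is positive $X$-invariant) to write $\Upsilon(i)t(x) = t(i\cdot x)\in\clspn\Upsilon(I)t(X)\Upsilon(I)$. For the reverse inclusion I would prove by induction on $m$ that $\Upsilon(i)t(x_1)\cdots t(x_m)\in\mathcal{B}$ for all $i\in I$ and $x_1,\dots,x_m\in X$: rewriting $\Upsilon(i)t(x_1) = t(i\cdot x_1)$ and expanding $i\cdot x_1\in IX = IXI$ as a limit of sums $\sum_k a_k\cdot\zeta_k\cdot b_k$ with $a_k,b_k\in I$, each term factors as $[\Upsilon(a_k)t(\zeta_k)]\,[\Upsilon(b_k)t(x_2)\cdots t(x_m)]$, whose first factor lies in $t(IX)\subseteq\mathcal{B}$ and whose second factor lies in $\mathcal{B}$ by the inductive hypothesis. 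Since $\OX = \clspn\{t^m(\xi)t^n(\eta)^*: m,n\geq0\}$ and $\Upsilon(i)t^m(\xi)t^n(\eta)^*\Upsilon(i') = [\Upsilon(i)t^m(\xi)][\Upsilon(i'^*)t^n(\eta)]^*$, this yields $\Upsilon(i)R\Upsilon(i')\in\mathcal{B}$ for every $R\in\OX$, proving the equality. Consequently $\sigma$ is an isomorphism of $\mathcal{O}_{IX}$ onto $\KK(I\ots\OX)$; pulling the Hilbert-bimodule inner product back along $\sigma$ exhibits $\pre{\mathcal{O}_{IX}}(I\ots\OX)_{\OX}$ as a left-full Hilbert bimodule, which is (1), and simultaneously gives $\mathcal{O}_{IX}\cong\clspn\Upsilon(I)\OX\Upsilon(I)$, the smallest hereditary subalgebra of $\OX$ containing $\Upsilon(I)$, which is (2).

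Finally, (3) would follow from (1) at no extra cost: a left-full Hilbert bimodule restricts to an imprimitivity bimodule between its left algebra $\mathcal{O}_{IX}$ and the closed ideal $\clspn\<I\ots\OX,\,I\ots\OX\>_{\OX}$ of $\OX$. Computing this ideal gives $\clspn\OX\Upsilon(I)\OX = \<\Upsilon(I)\>$, so $\mathcal{O}_{IX}$ is Morita equivalent to the ideal generated by $\Upsilon(I)$.
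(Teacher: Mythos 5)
Your proposal is correct and follows essentially the same route as the paper's proof: both identify $I\ots\OX$ with $\Upsilon(I)\OX$ via $i\ots S\mapsto\Upsilon(i)S$, use Lemma~\ref{compact} to realize $\KK(\Upsilon(I)\OX)$ as $\Upsilon(I)\OX\Upsilon(I)$, show that under these identifications the image of $\sigma$ is exactly this hereditary subalgebra, and then read off items (1)--(3) just as you do. The only real difference is that you prove explicitly, by induction on the length of products $\Upsilon(i)t(x_1)\cdots t(x_m)$, that $\Upsilon(I)$ and $t(IX)$ generate $\clspn\Upsilon(I)\OX\Upsilon(I)$ as a $C^*$-algebra, whereas the paper compresses this step into the assertion that verifying the two generator identities ``suffices.''
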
 

\begin{proof} Let $ \xi: I\ots\OX\rightarrow \Upsilon(I)\OX$ denote the Hilbert $\OX$-module isomorphism defined on elementary tensors by $i\ots S\mapsto\Upsilon(i)S$.
Denote the \emph{C}-covariant representation of \AXA\ by ($\pi, \Phi$), and let $L: \Upsilon(I)\OX\Upsilon(I)\rightarrow \KK(\Upsilon(I)\OX)$ be the isomorphism defined as in the proof of Lemma~\ref{compact}. Then we have the following diagram.

\begin{center}
\begin{tikzpicture}[node distance=1.6 cm, scale=1, transform shape]
\node (K)  [scale=0.7]  {$\KK(I\ots\OX)$};
\node (IX) [above of=K, left of=K, scale=0.7] {$IX$};
\node (I) [below of=K, left of=K, scale=0.7] {$I$};
\draw[->] (IX) to node [scale=0.6, right]{$\Phi$} (K);
\draw[->] (I) to  node [scale=0.6, right]{$\pi$}(K);
\node (L) [right of=K, right of=K, scale=0.7]{$\KK(\Upsilon(I)\OX)$};
\draw[->] (K) to node[scale=0.6,above]  {$\ad\xi$} (L);
\node (LL) [right of=L, xshift=2cm, scale=0.7]{$\Upsilon(I)\OX\Upsilon(I)$};
\draw[->] (L) to node[scale=0.6,above] {$L^{-1}$}  (LL);
\end{tikzpicture}
\end{center}

We claim that $\ad\xi^{-1}\circ L$ is an isomorphism onto \C($\pi,\Phi$). It suffices to show the equalities 
\[\ad\xi[\Phi(ixj)]=L\left(\Upsilon(i)t(x)\Upsilon(j)\right) \text{ and } \ad\xi[\pi(iaj)]=L\left(\Upsilon(iaj)\right),\]
for any $i,j\in I, x\in X,$ and $a\in A.$ Let $V: X\ots\OX\rightarrow \overline{t(X)\OX}$ denote the $A-\OX$ correspondence isomorphism defined on elementary tensors by $x\ots S\mapsto t(x)S$, for any $x\in X$, $S\in\OX$.  For $k\in I, S\in \OX$ we have 
\begin{align*}
\xi\Phi(ixj)(k\ots S) &= \xi(1_I\ot V)U_I(ixj\otsi k)\ots S\\
 &= \xi(a\ots t(z)S) & \text{(where $a\in I, z\in X$ with $az=ixjk$)}\\
&= \Upsilon(a)t(z)S\\
&=t(ixjk)S.
\end{align*}
On the other hand, we have 
\[ L\left(\Upsilon(i)t(x)\Upsilon(j)\right)\xi(k\ots S)= \Upsilon(i)t(x)\Upsilon(j)\Upsilon(k)S=t(ixjk)S,\]
which proves the first equality. For the second equality we observe that 
\[\xi\pi(iaj)(k\ots S)=\xi(iajk\ots S)=\Upsilon(iajk)S=L\left(\Upsilon(iaj)\right)\xi(k\ots S),\] which proves our claim.

 We may now conclude that the injective $*$-homomorphism $\sigma: \OIX\rightarrow \KK(I\ots \OX)$ is onto. And thus, the \C-correspondence  $\pre {\mathcal{O}_{IX}}(I\ots \OX)_{\OX}$  is a left-full Hilbert bimodule, which implies $\OIX$ and $\<I\ots\OX, I\ots\OX\>_{\OX}=\<\Upsilon(I)\>$ are Morita equivalent \C-algebras. Moreover, by Lemma~\ref{compact} we have $\OIX\cong  \KK(I\ots \OX)\cong \KK(\Upsilon(I)\OX)\cong\Upsilon(I)\OX\Upsilon(I)$, which proves item (2). 
\end{proof}

\begin{remark}\label{quotient}  Let \AXA\ be a  \C-correspondence, and let $I$ be a positive $X$-invariant ideal of $A$. Let $p: A\rightarrow A/I$ and $q: X\rightarrow X/XI$ be the natural quotient maps. Then, $X/XI$ can be viewed as a \C-correspondence over $A/I$ with the module actions and the inner product are given by 
\[p(a)\cdot q(x)\cdot q(a')=q(axa') \hspace{1cm} \<q(x), q(y)\> = p\left(\<x,y\>_A\right),\]
for $a,a'\in A$, and $x,y\in X$. 

Assume \AXA\ is regular and $I$ is an $X$-invariant ideal. Then $X/XI$ is a regular correspondence as well: properness  of $X/XI$ is straightforward by construction. To see injectivity let $a\in A$, and  let $p(a)q(x)=0$ for all $x\in X$. Then $ax\in XI$ for all $x\in X$, which means $a\in X^{-1}(I)$. Since $I$ is $X$-invariant and \AXA\ is regular we have  $X^{-1}(I)\subset I$, and thus $p(a)=0$. 

Now, for a regular correspondence \AXA\ and an $X$-invariant ideal $I$, consider the isomorphisms
\begin{align*}
&i_1: X\ots A/I \rightarrow X/XI,  \hspace{1cm} x\ots p(a)\mapsto q(xa)\\
& i_2: A/I{\otimes}_{A/I} X/XI\rightarrow X/XI, \hspace{1cm} p(a)\otsp q(x)\mapsto q(ax),
\end{align*}
where $x\in X, a\in A. $ Then, [$A/I, U_{A/I}$]: \AXA $\rightarrow \pre {A/I}(X/XI)_{A/I}$ is a morphism in \cat, where $U_{A/I}:=i_2^{-1}\circ i_1$. 

Note that for any $x\in X,$ $a\in A$, we have $U_{A/I}(x\ots p(a))= p(a')\otsp q(x'),$ for some $a'\in A, x'\in X,$ satisfying  $p(a')q(x')=q(x)p(a)$: $i_1(x\ots p(a))=q(xa)$. Since $q(xa)$ is an element of the non-degenerate correspondence $X/XI$, there exists $p(a')\in A/I, q(x')\in X/XI$ such that  $p(a')q(x')=q(x)p(a)$. 
\end{remark}

\begin{theorem}\label{quo}
Let \AXA\ be a regular correspondence and let $I$ be an $X$-invariant ideal. Then, we have the isomorphism $\OX/\<\Upsilon(I)\>\cong \mathcal{O}_{X/XI}$.
\end{theorem}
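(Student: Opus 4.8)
The plan is to realize $\OX/\<\Upsilon(I)\>$ as the image of the homomorphism that $\fun$ attaches to the morphism constructed in Remark~\ref{quotient}. Write $p\colon A\to A/I$ and $q\colon X\to X/XI$ for the quotient maps, and let $(\Upsilon',t')$ be the universal covariant representation of the correspondence $\pre{A/I}(X/XI)_{A/I}$, which is regular by Remark~\ref{quotient}. Applying $\fun$ to the morphism $[\pre A(A/I)_{A/I},U_{A/I}]\colon\pre AX_A\to\pre{A/I}(X/XI)_{A/I}$ yields the $\OX$--$\OXXI$-correspondence $\pre{\OX}\big((A/I)\otsp\OXXI\big)_{\OXXI}$, whose left action is implemented by a homomorphism $\sigma\colon\OX\to\KK\big((A/I)\otsp\OXXI\big)$. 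The argument then reduces to computing $\ker\sigma$ and $\operatorname{im}\sigma$ and applying the first isomorphism theorem.

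For the kernel, I would note that the left action $\varphi_{A/I}$ of $A$ on the identity correspondence $\pre{A/I}(A/I)_{A/I}$ sends $a$ to left multiplication by $p(a)$, so that $\ker\varphi_{A/I}=I$. Since $\pre AX_A$ is regular, Proposition~\ref{kerneliso} applies directly and gives $\ker\sigma=\<\Upsilon(\ker\varphi_{A/I})\>=\<\Upsilon(I)\>$.

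For the image, I would first identify the codomain of $\sigma$. The assignment $p(a)\otsp S\mapsto\Upsilon'(p(a))S$ defines an isomorphism of right Hilbert $\OXXI$-modules $(A/I)\otsp\OXXI\to\OXXI$, using nondegeneracy of the identity correspondence, and Lemma~\ref{compact} with $A=B=\OXXI$ gives $\KK\big((A/I)\otsp\OXXI\big)\cong\KK(\OXXI)\cong\OXXI$. Tracing the $C$-covariant representation $(\pi,\Phi)$ of $\pre AX_A$ through this identification---using the description $U_{A/I}(x\ots p(a))=p(a')\otsp q(x')$ with $q(a'x')=q(xa)$ from Remark~\ref{quotient}, together with the representation identities $\Upsilon'(b)t'(y)=t'(b\cdot y)$ and $t'(y\cdot b)=t'(y)\Upsilon'(b)$---shows that $\pi(a)$ becomes left multiplication by $\Upsilon'(p(a))$ and $\Phi(x)$ becomes left multiplication by $t'(q(x))$. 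Consequently $\operatorname{im}\sigma=C^*(\pi,\Phi)$ is carried onto $C^*\big(\Upsilon'(p(A)),t'(q(X))\big)$, which is all of $\OXXI$ since $p,q$ are surjective and $(\Upsilon',t')$ generates $\OXXI$; thus $\sigma$ is surjective.

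Combining the two computations, $\sigma$ descends to an isomorphism $\OX/\<\Upsilon(I)\>\cong\OXXI$. I expect the image computation to be the main obstacle: verifying cleanly that $\Phi(x)$ corresponds to left multiplication by $t'(q(x))$ requires tracking $x$ successively through $T(x)$, the isomorphism $U_{A/I}$, and the map $V_Y$, and then unwinding the two tensor legs in $\Phi(x)=(1_M\ot V_Y)(T(x)\ot 1_Y)$. The kernel computation, by contrast, is immediate once Proposition~\ref{kerneliso} is invoked.
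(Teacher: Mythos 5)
Your proposal is correct and follows essentially the same route as the paper's own proof: identify $\KK\big((A/I)\otsp\mathcal{O}_{X/XI}\big)$ with $\mathcal{O}_{X/XI}$ via the module isomorphism $p(a)\otsp S\mapsto\tilde{\Upsilon}(p(a))S$ and left multiplication, trace the \emph{C}-covariant representation $(\pi,\Phi)$ through this identification (using the description of $U_{A/I}$ from Remark~\ref{quotient}) to see that $\sigma$ is surjective, and combine Proposition~\ref{kerneliso} with the first isomorphism theorem. The computation you flag as the main obstacle is precisely the displayed calculation in the paper's proof, carried out exactly as you outline.
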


\begin{proof} Let ($\Upsilon, t$) and ($\tilde{\Upsilon}, \tilde{t}$) be universal covariant representations of  $X$ and $X/XI$, respectively. And, let $p: A\rightarrow A/I$ and $q: X\rightarrow X/XI$ be the  quotient maps. The map $\xi: A/I \otsp  \mathcal{O}_{X/XI}\rightarrow  \mathcal{O}_{X/XI}$ defined on elementary tensors by \mbox{$p(a)\ot S \mapsto \tilde{\Upsilon}(p(a))S$} is a Hilbert-$\mathcal{O}_{X/XI}$ module isomorphism, and extends to a \C-algebra isomorphism $\ad\xi: \KK(A/I \otsp  \mathcal{O}_{X/XI})\rightarrow  \KK(\mathcal{O}_{X/XI})$. Let $L: \mathcal{O}_{X/XI}\rightarrow \KK(\mathcal{O}_{X/XI})$ be the \C-algebra isomorphism defined by $L(S)T=ST$ for $S,T\in  \mathcal{O}_{X/XI}$. Denote the \ec-covariant representation of \AXA\ on $\KK(A/I\otsp \mathcal{O}_{X/XI})$ by $(\pi, \Phi).$ Then we have the following diagram.

\begin{center}
\begin{tikzpicture}[node distance=1.4 cm, scale=1, transform shape]
\node (K)  [scale=0.7]  {$\KK(A/I\otsp \mathcal{O}_{X/XI})$};
\node (X) [above of=K, left of=K, scale=0.7] {$X$};
\node (A) [below of=K, left of=K, scale=0.7] {$A$};
\draw[->] (X) to node [scale=0.6, right]{$\Phi$} (K);
\draw[->] (A) to  node [scale=0.6, right]{$\pi$}(K);
\node (L) [right of=K, right of=K, scale=0.7]{$\KK( \mathcal{O}_{X/XI})$};
\draw[->] (K) to node[scale=0.6,above]  {$\ad\xi$} (L);
\node (LL) [right of=L, xshift=0.7cm, scale=0.7]{ $\mathcal{O}_{X/XI}$};
\draw[->] (L) to node[scale=0.6,above] {$L^{-1}$}  (LL);
\end{tikzpicture}
\end{center}

We claim $\ad\xi^{-1}\circ L$ is an isomorphism onto \C$(\pi, \Phi)$. To prove our claim we first show $\tilde{t}(q(x))=L^{-1}\ad\xi(\Phi(x)),$ for $x\in X$. Let $a\in A, S\in  \mathcal{O}_{X/XI}.$ On one hand we have 
\[ L(\tilde{t}(q(x))\xi(p(a)\otsp S)= L(\tilde{t}(q(x))\tilde{\Upsilon}(p(a))S= \tilde{t}(q(x))\tilde{\Upsilon}(p(a))S= \tilde{t}(q(xa))S.\] Now, let $V: X/XI\otsp \mathcal{O}_{X/XI}\rightarrow \mathcal{O}_{X/XI}$ denote the  isomorphism defined on elementary tensors by $q(z)\otsp T\mapsto \tilde{t}(q(z))T$, where $z\in X$ and $T\in \mathcal{O}_{X/XI}$.  We have 
\begin{align*}
\xi\Phi(x)(p(a)\otsp S)&=\xi(1_{A/I}\ot V)(U_{A/I}(x\ots p(a))\otsp S)\\
&= \xi(1_{A/I}\ot V)(p(a')\otsp q(x')\otsp S) & \text{ where $q(x)p(a)=p(a')q(x')$\footnotemark} \\
&=\xi\left[p(a')\otsp \tilde{t}(q(x'))S\right]\\
&= \tilde{t}(q(xa))S. 
\end{align*}
\footnotetext{See the last paragraph of Remark~\ref{quotient}.}
It is easy to show $\tilde{\Upsilon}(p(a))=L^{-1}\ad\xi(\pi(a)),$ for any $a\in A$, completing the proof of our claim. We may now conclude that  $\sigma: \OX\rightarrow \KK(A/I\otsp \mathcal{O}_{X/XI})$ is surjective. Then by the first isomorphism theorem we have $\OX/\ker\sigma \cong \sigma(\OX)$. By using Proposition~\ref{kerneliso} we obtain $\OX/\<\Upsilon(I)\> \cong \KK(A/I\otsp \mathcal{O}_{X/XI})\cong  \mathcal{O}_{X/XI}$.\end{proof}

We next give a factorization property in \cat, which allows us to generalize the first item of Theorem~\ref{posinv}. But first we need a Lemma. 

\begin{lemma}\label{inv} For an  $A$--$B$-correspondence isomorphism $U_M: X\ots M \rightarrow M\otss Y$ we have the following.
\begin{enumerate}[\normalfont(1)]
\item The ideal $B_M=\overline{\<M,M\>_B}$ of $B$ is positive $Y$-invariant. 
\item$\ker\varphi_M $  is a positive $X$-invariant ideal of $A$. If \AXA\ and \BYB\ are regular correspondences, then $\ker\varphi_M$ is an $X$-invariant ideal.
\end{enumerate}
\end{lemma}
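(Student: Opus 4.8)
The plan is to extract everything from the single structural fact that $U_M$ is an $A$--$B$-correspondence isomorphism, hence simultaneously preserves the $B$-valued inner products and intertwines the left $A$-actions of $X\ots M$ and $M\otss Y$. I will use throughout the inner-product formulas $\<x\ots m, x'\ots m'\>_B=\<m,\varphi_M(\<x,x'\>_A)m'\>_B$ and $\<m\otss y, m'\otss y'\>_B=\<y,\varphi_Y(\<m,m'\>_B)y'\>_B$, together with the left actions $\varphi_{X\ots M}(a)=\varphi_X(a)\ot 1_M$ and $\varphi_{M\otss Y}(a)=\varphi_M(a)\ot 1_Y$.

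For part (1), I would first note that because $U_M$ preserves the $B$-valued inner products, the ideals $B_{X\ots M}$ and $B_{M\otss Y}$ coincide. From the first formula, $B_{X\ots M}$ is the closed span of the elements $\<m,\varphi_M(\<x,x'\>_A)m'\>_B$, each of which lies in $\<M,M\>_B$ since $\varphi_M(\<x,x'\>_A)m'\in M$; hence $B_{X\ots M}\subset B_M$. From the second formula, since the closed span of the $\<m,m'\>_B$ is $B_M$, one obtains $B_{M\otss Y}=\overline{\<Y,B_M\cdot Y\>_B}$. Combining these, $\overline{\<Y,B_M\cdot Y\>_B}=B_{M\otss Y}=B_{X\ots M}\subset B_M$, which is exactly the positive $Y$-invariance condition $\<Y,B_M\cdot Y\>_B\subset B_M$ recorded just after the definition of invariance.

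For the positive $X$-invariance of $K:=\ker\varphi_M$ in part (2), the key observation is that $K$ acts as zero on $M\otss Y$: indeed $\varphi_M(K)=0$ forces $\varphi_M(k)\ot 1_Y=0$. Since $U_M$ intertwines the left actions, $K$ then also acts as zero on $X\ots M$, i.e.\ $(k\cdot x)\ots m=0$ for all $k\in K$, $x\in X$, $m\in M$. Writing $z=k\cdot x$ and taking inner products yields $\<m,\varphi_M(\<z,z\>_A)m'\>_B=0$ for all $m,m'$, whence $\varphi_M(\<z,z\>_A)=0$, that is $\<z,z\>_A\in K$. I would then invoke the standard Hilbert-module fact that $\<z,z\>_A\in K$ forces $z\in XK$ (for instance via $z=\lim_n z\,(\<z,z\>_A)^{1/n}$ with $(\<z,z\>_A)^{1/n}\in K$), concluding $k\cdot x\in XK$ and hence $KX\subset XK$.

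Finally, assuming $X$ and $Y$ regular, I would obtain full $X$-invariance by proving $X^{-1}(K)\subset K$; since $X$ is regular, $J_X=A$ and this is exactly the negative-invariance requirement $J_X\cap X^{-1}(K)\subset K$. Given $a\in X^{-1}(K)$, the defining condition $\<x,a\cdot y\>_A\in K=\ker\varphi_M$ makes $\<x\ots m,(a\cdot y)\ots m'\>_B=0$ for all $x,y\in X$ and $m,m'\in M$; as the $x\ots m$ are dense in $X\ots M$, this forces $(a\cdot y)\ots m'=0$, so $a\in\ker\varphi_{X\ots M}$. The main obstacle, and the only place where injectivity of $Y$ is genuinely used, is to identify $\ker\varphi_{X\ots M}$ with $K$: since $U_M$ is an isomorphism, $\ker\varphi_{X\ots M}=\ker\varphi_{M\otss Y}$, and since $Y$ is injective, Lemma~\ref{fowler} gives $\varphi_M(a)\ot 1_Y=0\iff\varphi_M(a)=0$, so $\ker\varphi_{M\otss Y}=\ker\varphi_M=K$. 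Therefore $a\in K$, which completes the proof.
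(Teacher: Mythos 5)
Your proof is correct and takes essentially the same route as the paper's: both arguments use that $U_M$ preserves the $B$-valued inner products and intertwines the left $A$-actions to transfer $B_M$ (resp.\ $\ker\varphi_M$) between $X\ots M$ and $M\otss Y$, with injectivity of $\varphi_Y$ driving the regular case. The only cosmetic differences are that for positive invariance you verify $\ker\varphi_M\cdot X\subset X\cdot\ker\varphi_M$ directly via the functional-calculus factorization, where the paper instead checks $\<\ker\varphi_M\cdot X,X\>_A\subset\ker\varphi_M$ and appeals to the equivalence stated just after the definition of invariance, and in the final step you cite Lemma~\ref{fowler} where the paper carries out the equivalent inner-product computation by hand.
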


\begin{proof} For the first item we compute
\[ \<Y, B_M\cdot Y\>_B=\<M\otss Y, M\otss Y\>_B =  \<X\ots M, X\ots M\>_B = \< M, A_X\cdot M\>_B\subset B_M, \]
as desired.  For the second item denote $\ker\varphi_M$ by $K$. We have 
\[ 0=\<K\cdot M\otss Y, M\otss Y\>_B = \<K\cdot X\ots M, X\ots M\>_B=\<M, \<KX,X\>_A\cdot M\>_B,\]
which implies $\<KX, X\>_A\subset K$, as desired. Now, assume \AXA\ and \BYB\ are regular correspondences. Let $a\in X^{-1}(K)$. Then, $\<ax,x'\>_A\in K$ for any $x,x'\in X$. This means
\[ \<ax\ots m, x'\ots n\>_B =\<m, \<ax,x'\>_A\cdot n\>_B=0,\]
for any $x,x'\in X$ and $m,n\in M$. This implies  $a\in \ker\varphi_{X\ots M}=\ker\varphi_{M\otss Y}.$ Then for any $m,n\in M$ and  $y,y'\in Y$,  we have 
\[ 0=\<a\cdot m\otss y , m'\otss y'\>_B = \<y, \<am,m'\>_B\cdot y'\>_B,\]
which implies $\<am,m'\>_B\in \ker\varphi_Y$. Since \BYB\ is a regular correspondence, we conclude that $a\in\ker\varphi_M$. \end{proof}

Let \tn{[\AMB, $U_M$]: \AXA $\rightarrow$ \BYB} be a morphism in \cat. The first item of Lemma~\ref{inv} and Lemma~\ref{idealmorph} together imply that \mbox{\tn{[$\pre {B_M}(B_M)_B, U_{B_M}$]}: $\pre {B_M}(B_MY)_{B_M} \rightarrow$ \BYB} is a morphism in \cat. 

\begin{proposition}\label{factor}
For any morphism \tn{[\AMB, $U_M$]: \AXA $\rightarrow$ \BYB} in \cat, there exists a morphism \tn{$[\pre AM'_{B_M}, U_{M'}]$: \AXA$\rightarrow {\pre {B_M}(B_M Y)}_{B_M}$} such that the equality \[\tn{[$\pre AM_B, U_M$]=[$\pre {B_M}(B_M)_B, U_{B_M}$]$\circ$ [$\pre AM'_{B_M}, U_{M'}$]}\] holds. 
\end{proposition}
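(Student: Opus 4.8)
The plan is to realize $M'$ as the module $M$ itself, but regarded as a correspondence over the smaller coefficient algebra $B_M$. Since the $B$-valued inner products on $M$ take values in $B_M=\overline{\<M,M\>_B}$, the module $M$ is an $A$--$B_M$-correspondence by \cite[Lemma~3.2]{ench}; call it $\pre AM'_{B_M}$. Note that $\varphi_{M'}=\varphi_M$ and that the rank-one operators $\theta_{m,m'}$ agree whether computed with $B_M$- or $B$-valued inner products, so $\KK_{B_M}(M')=\KK_B(M)$ and hence $M'$ is proper because $M$ is. To build $U_{M'}$ I use two canonical isomorphisms. First, because $\<m,m'\>_B\in B_M$ and $B_M$ is positive $Y$-invariant (Lemma~\ref{inv}(1)), the $A$--$B$-correspondence $M\otss Y$ in fact has all its inner products in $B_M$, and the relation $mb\otss y=m\otss\varphi_Y(b)y$ (for $b\in B_M$) yields a canonical $A$--$B_M$-correspondence isomorphism $\eta\colon M\otss Y\to M'\otsm(B_MY)$. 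Second, since $M=\overline{MB_M}$, the map $\zeta:=\xi_{(r,M',B_M)}\colon M'\otsm B_M\to M$, $m\otimes b\mapsto mb$, is an $A$--$B$-correspondence isomorphism. I then \emph{define} $U_{M'}:=\eta\circ U_M$, using the identification $X\ots M'=X\ots M$; this is an $A$--$B_M$-correspondence isomorphism $X\ots M'\to M'\otsm(B_MY)$.

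Next I verify that $[\pre AM'_{B_M},U_{M'}]$ is a morphism in \cat, i.e.\ that $J_X\cdot M'\subset M'\cdot J_{B_MY}$ (properness of $M'$ was noted above). As in the proof of Lemma~\ref{idealmorph}, \cite[Proposition~9.2]{katsura} gives $J_{B_MY}=B_M\cap J_Y$. Using $MB_M=M$ together with $B_MJ_Y\subset B_M\cap J_Y$, one gets $MJ_Y=M\,B_MJ_Y\subset M(B_M\cap J_Y)=M'\cdot J_{B_MY}$; combined with the defining inclusion $J_X\cdot M\subset M\cdot J_Y$ of the original morphism this gives $J_X\cdot M'=J_X\cdot M\subset M'\cdot J_{B_MY}$, as required.

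It then remains to verify the factorization. The composite $[\pre{B_M}(B_M)_B,U_{B_M}]\circ[\pre AM'_{B_M},U_{M'}]$ is, by definition of composition in \cat, the class of $\pre A(M'\otsm B_M)_B$ equipped with the isomorphism $(1_{M'}\ot U_{B_M})(U_{M'}\ot 1_{B_M})$. By Definition~\ref{morphiso}, this class equals $[\pre AM_B,U_M]$ precisely when the isomorphism $\zeta\colon M'\otsm B_M\to M$ intertwines the two, i.e.\ when
\[ U_M\circ(1_X\ot\zeta)=(\zeta\ot 1_Y)\circ(1_{M'}\ot U_{B_M})(U_{M'}\ot 1_{B_M}).\]
I expect this commuting-square check to be the main (and only genuine) obstacle: it is a diagram chase on an elementary tensor $x\ot(m\otimes b)$ that must keep careful track of the associativity isomorphisms and of the definition $U_{B_M}=\xi_{(l,Y,B_M)}^{-1}\circ\xi_{(r,B_MY,B_M)}$. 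Tracing the upper-right path and using right $B$-linearity of $U_M$ sends $x\ot(m\otimes b)$ to $U_M(x\ot m)\cdot b$, while the lower-left path, after applying $U_{M'}=\eta\circ U_M$, then $U_{B_M}$ (which absorbs the trailing $b\in B_M$ into the $Y$-factor), and finally $\zeta$, produces the same element $U_M(x\ot m)\cdot b$. Since both composite isomorphisms are bounded and agree on elementary tensors, they agree, the square commutes, and the factorization follows.
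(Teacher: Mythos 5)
Your proposal is correct and follows essentially the same route as the paper: take $M'$ to be the Hilbert module $M$ viewed as an $A$--$B_M$-correspondence, build $U_{M'}$ by composing $U_M$ with canonical identification isomorphisms, and verify the factorization square on elementary tensors via Cohen--Hewitt factorization. Your packaging of the identification as a single isomorphism $\eta\colon M\otimes_B Y\to M'\otimes_{B_M}(B_M Y)$ (where the paper instead chains together several maps $\iota, l, j, k$ through $M\otimes_B Y\otimes_B B_M$), and your explicit check of properness and of the condition $J_X\cdot M'\subset M'\cdot J_{B_M Y}$, are minor streamlinings of the same argument rather than a different proof.
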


\begin{proof} Let  $\pre AM'_{B_M}$ be  the Hilbert $B$-module $M$ viewed as $A$--$B_M$-correspondence. Consider the following \C-correspondence isomorphisms:
\begin{align*}
& \iota: \pre A(M\otss B_M)_{B_M} \rightarrow \pre AM'_{B_M}, &m\otss b\mapsto m\cdot b\\
& l: \pre A(M'\otsbb B_M)_B\rightarrow \pre AM_B, &m\otsbb b\mapsto m\cdot b\\
& j: \pre {B_M}(B_M\otss Y)_B \rightarrow \pre {B_M}(B_MY)_B, & b\otss y\mapsto b\cdot y\\
& k: \pre {B_M}(B_MY\otss B_M)_{B_M} \rightarrow \pre {B_M}(B_MY)_{B_M}, & \xi\otss b\mapsto \xi\cdot b
\end{align*} where $b\in B_M, m\in M, y\in Y,$ and $\xi\in B_MY.$ Let $U_{M'}$ be the composition of the $A-B_M$ correspondence isomorphisms
\[
\begin{tikzcd}[row sep=large, column sep=large]
&[-40pt]X\ots M' \arrow{r}{1_X\ot \iota^{-1}}  & X\ots M\otss B_M  \arrow{r}{U_M\ot 1_{B_M}}& M\otss Y\otss B_M\arrow{dll}[swap]{l^{-1}\ot 1_Y\ot 1_{B_M}} \\
& M'\otsm B_M\otss Y\otss B_M \arrow{r}[swap]{1_{M'}\ot j\ot 1_{B_M}} & M'\otsm B_MY\otss B_M \arrow{r}[swap]{1_{M'}\ot k} & M'\otsm B_M Y.
\end{tikzcd}
\]

 To prove  [$\pre {B_M}(B_M)_B, U_{B_M}$]$\circ$ $[\pre AM'_{B_M}, U_{M'}]=[\pre AM_B, U_M]$ we show that the diagram 
\[
\begin{tikzcd}
X\ots M'\otsm B_M \arrow{r}{1_X\ot l} \arrow[swap]{d}{(1_{M'}\ot U_{B_M})(U_{M'}\ot 1_{B_M})} & X\ots M \arrow{d}{U_M} \\
M'\otsm B_M \otss Y \arrow{r}\arrow{r}{l\ot 1_Y} & M\otss Y 
\end{tikzcd}
\]
commutes. Take an elementary tensor $x\ots m \in X\ots M'$. By Cohen-Hewitt factorization theorem there exist $m'\in M, b'\in B_M $ such that $m=m'\cdot b'.$  Then we have 
\begin{align*}
&U_{M'}(x\ots m)= (1_{M'}\ot k)(1_{M'}\ot j\ot 1_{B_M})(l^{-1}\ot 1_Y\ot 1_{B_M})(U_M\ot 1_{B_M})(1_X\ot \iota^{-1})(x\ots m)\\
&= (1_M\ot k)(1_M\ot j\ot 1_{B_M})(l^{-1}\ot 1_Y\ot 1_{B_M})(U_M\ot 1_{B_M})(x\ots m'\otss b') \\
&= (1_M\ot k)(1_M\ot j\ot 1_{B_M})(l^{-1}\ot 1_Y\ot 1_{B_M})\lim_{n\rightarrow\infty}\sum_{i=1}^{N_n} m_i^n\otss y_i^n\otss b',
\end{align*} where $\lim_{n\rightarrow\infty}\sum_{i=1}^{N_n} m_i^n\otss y_i^n= U_M(x\ots m').$ Again by Cohen-Hewitt factorization theorem, there exist $\xi_i^n\in M, c_i^n\in B_M$ such that $m_i^n=\xi_i^n\cdot c_i^n$. Then we may continue our computation as 
\begin{align*}
&= (1_M\ot k)(1_M\ot j\ot 1_{B_M})\lim_{n\rightarrow\infty}\sum_{i=1}^{N_n} \xi_i^n\otsbb c_i^n \otss y_i^n\otss b'\\
&=(1_M\ot k)\lim_{n\rightarrow\infty}\sum_{i=1}^{N_n} \xi_i^n\otsbb c_i^n \cdot  y_i^n\otss b'\\
&=\lim_{n\rightarrow\infty}\sum_{i=1}^{N_n} \xi_i^n\otsbb c_i^n \cdot y_i^n\cdot b'
\end{align*}
Then, for the elementary tensor $x\ots m\otss b$ of  \mbox{$X\ots M\otss B_M$} we have 
\begin{align*}
&\hspace{-1cm}(l\ot 1_Y)(1_M\ot U_{B_M})(U_{M'}\ot 1_{B_M})(x\ots m\otss b)\\
&= (l\ot 1_Y)(1_M\ot U_{B_M})\lim_{n\rightarrow\infty}\sum_{i=1}^{N_n} \xi_i^n\otsbb c_i^n \cdot  y_i^n\cdot b'\otss b\\
&=\lim_{n\rightarrow\infty}\sum_{i=1}^{N_n} m_i^n\otss y_i^n\cdot b'b\\
&= U_M(x\ots m')b'b\\
&= U_M(1_X\ot l)(x\ots m\otsbb b),
\end{align*}
as desired.\end{proof}

\begin{corollary}\label{left-full}
Let \tn{[\AMB, $U_M$]:} \AXA$\rightarrow$\BYB\ be a morphism in \cat. If \AMB\ is a left-full Hilbert bimodule, then so is the associated correspondence $\pre {\OX}(M\otss\OY)_{\OY}$. 
\end{corollary}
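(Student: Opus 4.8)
The plan is to exploit the factorization of Proposition~\ref{factor} together with functoriality, which reduces the statement to two tractable special cases. By Proposition~\ref{factor} we may write
\[[\pre AM_B, U_M] = [\pre{B_M}(B_M)_B, U_{B_M}]\circ[\pre AM'_{B_M}, U_{M'}],\]
and applying the functor $\fun$ yields
\[[\pre{\OX}(M\otss\OY)_{\OY}] = \fun([\pre{B_M}(B_M)_B, U_{B_M}])\circ\fun([\pre AM'_{B_M}, U_{M'}]).\]
Since composition in \enchilada\ is the balanced tensor product, it then suffices to prove that each of the two factors on the right is a left-full Hilbert bimodule, because the balanced tensor product of two left-full Hilbert bimodules is again a left-full Hilbert bimodule. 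The Hilbert-bimodule structure is inherited, and for left-fullness one uses $\pre C\<z\ot w, z'\ot w'\> = \pre C\<z\cdot\pre D\<w,w'\>, z'\>$ together with the fact that the right-hand factor is left-full to collapse $\overline{\pre D\<w,w'\>}$ to the coefficient algebra, and then left-fullness of the left-hand factor.

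For the second factor, I would observe that $B_M$ is a positive $Y$-invariant ideal of $B$ by Lemma~\ref{inv}(1), and that $[\pre{B_M}(B_M)_B, U_{B_M}]$ is exactly the type of morphism covered by Theorem~\ref{posinv}(1), with the roles of $A$, $X$, $I$ played by $B$, $Y$, $B_M$. Hence $\fun([\pre{B_M}(B_M)_B, U_{B_M}]) = [\pre{\mathcal{O}_{B_MY}}(B_M\otss\OY)_{\OY}]$ is a left-full Hilbert bimodule, with no regularity hypothesis required.

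For the first factor I would show it is in fact an imprimitivity bimodule. First, $\pre AM'_{B_M}$ is itself an \ibm: it is left-full over $A$ since $M$ is a left-full Hilbert bimodule and the left inner products are unchanged, and it is right-full over $B_M=\overline{\<M,M\>_B}$ by the very definition of $B_M$. I would then argue that $[\pre AM'_{B_M}, U_{M'}]$ is an isomorphism in \cat\ by constructing the inverse morphism from the dual imprimitivity bimodule $\widetilde{M'}$, equipped with the correspondence isomorphism $U_{\widetilde{M'}}$ induced from $U_{M'}$, and checking that the two composites reduce to the canonical identifications $M'\otss\widetilde{M'}\cong A$ and $\widetilde{M'}\ots M'\cong B_M$ with compatible intertwiners. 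Because $\fun$ is a functor it preserves isomorphisms, and isomorphisms in \enchilada\ are precisely imprimitivity bimodules \cite[Proposition~2.6]{nat}; therefore $\fun([\pre AM'_{B_M}, U_{M'}])$ is an \ibm, in particular a left-full Hilbert bimodule.

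The main obstacle will be the verification in the previous paragraph that $[\pre AM'_{B_M}, U_{M'}]$ is a \cat-isomorphism, which requires assembling the inverse pair $(\widetilde{M'}, U_{\widetilde{M'}})$ and confirming the coherence diagrams of Definition~\ref{morphiso} for both composites. Everything else is a direct appeal either to Proposition~\ref{factor} and Theorem~\ref{posinv}(1), or to the routine computation that left-fullness is preserved under the balanced tensor product, and at no point is regularity of \AXA\ or \BYB\ needed.
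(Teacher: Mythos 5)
Your proposal is correct and takes essentially the same route as the paper's proof: factor the morphism via Proposition~\ref{factor}, apply the functor, use Theorem~\ref{posinv}(1) to see that $\pre {\mathcal{O}_{B_MY}}(B_M\otss\OY)_{\OY}$ is a left-full Hilbert bimodule, observe that $\pre AM'_{B_M}$ is an imprimitivity bimodule so that $[\pre AM'_{B_M}, U_{M'}]$ is a \cat-isomorphism and hence $\pre \OX(M'\otsbb \mathcal{O}_{B_MY})_{\mathcal{O}_{B_MY}}$ is an imprimitivity bimodule by functoriality and \cite[Proposition~2.6]{nat}, and conclude by tensoring. The only difference is one of detail: where the paper simply asserts that an imprimitivity-bimodule morphism is an isomorphism in \cat, you propose constructing the inverse pair from the dual module explicitly, which is a legitimate (if more laborious) verification of the same fact.
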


\begin{proof} By Proposition~\ref{factor} we have [$\pre {B_M}(B_M)_B, U_{B_M}$]$\circ$ [$\pre AM'_{B_M}, U_{M'}$]$=$[$\pre AM_B, U_M$], and thus \[ [\pre \OX(M\otss\OY)_{\OY}]=[\pre {\mathcal{O}_{B_MY}}(B_M\otss \OY)_{\OY}]\circ [\pre \OX(M'\otsbb \mathcal{O}_{B_M Y})_{\mathcal{O}_{B_MY}}].\]  Since $\pre AM'_{B_M}$ is an imprimitivity bimodule, [$\pre AM'_{B_M}, U_{M'}$] is an isomorphism in \cat, and thus, $\fun\left( [\pre AM'_{B_M}, U_{M'}]\right)= [\pre \OX(M'\otsbb \mathcal{O}_{B_MY})_{\mathcal{O}_{B_MY}}]$ is an isomorphism in \enchilada. This means  $\pre \OX(M'\otsbb \mathcal{O}_{B_MY})_{\mathcal{O}_{B_MY}}$ is an imprimitivity bimodule. We also know by Theorem~\ref{posinv} that $\pre {\mathcal{O}_{B_MY}}(B_M\otss \OY)_{\OY}$ is a left-full Hilbert bimodule. Hence, $\pre \OX(M\otss\OY)_{\OY}$ is a left-full Hilbert bimodule. \end{proof}

\section{Exactness}

We denote by \cats\ the subcategory of \cat\ where all objects are regular \C-correspondences. Every morphism in \cat\ has a kernel; however, not every morphism has a cokernel. We show in this section that every kernel in \cats\ has a cokernel. This observation leads us  to study exactness in the subcategory \cats\ instead of \cat .

To study kernels in \cats\ we need some understanding of monomorphisms in this category. Following Lemma is necessary for this purpose.  

\begin{lemma}\label{cancel} Let $\mu: M\otss N \rightarrow M'\otss N$ be an $A$--$C$-correspondence isomorphism where $M$ and $M'$ are $A$--$B$-correspondences, and \BNC\ is a left-full Hilbert bimodule. Then, there exists an  isomorphism $\iota:$ \AMB $\rightarrow$ \AMMB\ such that $\iota\ot 1_N$ = $\mu$. 
\end{lemma}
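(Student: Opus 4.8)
The plan is to cancel the common right-hand tensor factor $N$ by tensoring $\mu$ with the conjugate of $N$. Write $I_C=\clspn\<N,N\>_C$ for the closed ideal of $C$ generated by the right inner products. Since \BNC\ is a \emph{left-full} Hilbert bimodule, the two inner products exhibit $N$ as a $B$--$I_C$ imprimitivity bimodule, and its conjugate Hilbert bimodule $\overline N$ (a $C$--$B$-correspondence on which $C$ acts through $I_C$) carries the two standard evaluation maps
\begin{align*}
\Theta\colon N\otsc\overline N\to B,\quad & n\otsc\overline{n'}\mapsto \pre B\<n,n'\>,\\
\overline\Theta\colon \overline N\otss N\to C,\quad & \overline n\otss n'\mapsto \<n,n'\>_C,
\end{align*}
where $\Theta$ is a $B$--$B$-correspondence isomorphism onto $B$ (this is exactly where left-fullness is used) and $\overline\Theta$ is a $C$--$C$-correspondence isomorphism onto $I_C$. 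A cosmetic nuisance, which I would dispatch once at the outset, is that the left $C$-action on $\overline N$ and the right $C$-action on $M\otss N$ both factor through $I_C$ (using $N\cdot I_C=N$), so that tensoring over $C$ agrees with tensoring over $I_C$ throughout.

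Using $\Theta$, I would form for any $A$--$B$-correspondence $P$ the canonical $A$--$B$-correspondence isomorphism
\[
\alpha_P\colon P\otss N\otsc\overline N\to P,\qquad p\otss n\otsc\overline{n'}\mapsto p\cdot\pre B\<n,n'\>,
\]
obtained from $1_P\otss\Theta$ followed by the right-unit isomorphism $P\otss B\cong P$. I then \emph{define}
\[
\iota:=\alpha_{M'}\circ\bigl(\mu\otsc 1_{\overline N}\bigr)\circ\alpha_M^{-1}\colon M\to M'.
\]
Since $\mu$ is an $A$--$C$-correspondence isomorphism and $\overline N$ is a $C$--$B$-correspondence, $\mu\otsc 1_{\overline N}$ is an $A$--$B$-correspondence isomorphism; as $\alpha_M$ and $\alpha_{M'}$ are $A$--$B$-correspondence isomorphisms, $\iota$ is an $A$--$B$-correspondence isomorphism, as required.

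It remains to verify $\iota\otss 1_N=\mu$. The key is the triangle (zig-zag) identity
\[
\alpha_P\otss 1_N=1_{P\otss N}\otsc\overline\Theta\colon P\otss N\otsc\overline N\otss N\to P\otss N,
\]
which follows from the bimodule compatibility $\pre B\<n,n'\>\cdot n''=n\cdot\<n',n''\>_C$ by a short computation on elementary tensors. Applying $-\otss 1_N$ to the definition of $\iota$ and substituting this identity for both $P=M$ and $P=M'$ turns the right-hand side into $(1_{M'\otss N}\otsc\overline\Theta)\circ(\mu\otsc 1_{\overline N\otss N})\circ(1_{M\otss N}\otsc\overline\Theta)^{-1}$. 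Because $\mu$ is a right $C$-module map, it commutes with the contraction $\overline\Theta$, i.e. $(1_{M'\otss N}\otsc\overline\Theta)\circ(\mu\otsc 1_{\overline N\otss N})=\mu\circ(1_{M\otss N}\otsc\overline\Theta)$, so the two copies of $1\otsc\overline\Theta$ cancel and leave $\iota\otss 1_N=\mu$. (Invertibility of $\alpha_M\otss 1_N=1_{M\otss N}\otsc\overline\Theta$ needs no separate argument: $-\otss N$ is functorial and $\alpha_M$ is an isomorphism.)

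The main obstacle I anticipate is the content of the first paragraph: establishing that $\Theta$ is an isomorphism onto $B$, equivalently that a left-full Hilbert bimodule is a $B$--$I_C$ imprimitivity bimodule with conjugate $\overline N$. This is the only point at which the left-fullness hypothesis enters, and it is precisely what legitimizes the cancellation; everything afterward is bookkeeping, the one genuinely substantive check being the triangle identity above.
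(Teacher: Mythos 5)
Your proposal is correct and is essentially the paper's own proof: your conjugate module $\overline N$ and evaluation map $\Theta$ are the paper's $\tilde N$ and $j$, your definition $\iota=\alpha_{M'}\circ(\mu\otsc 1_{\overline N})\circ\alpha_M^{-1}$ is exactly the paper's formula $\iota=\xi_{(r,M',B)}(1_{M'}\ot j)(\mu\ot 1_{\tilde N})(1_M\ot j^{-1})\xi_{(r,M,B)}^{-1}$, and your zig-zag identity is the paper's elementary-tensor computation using the compatibility relation $\pre B\<n,n'\>\cdot n''=n\cdot\<n',n''\>_C$. The only difference is presentational: you cancel the invertible contraction $1\otsc\overline\Theta$ (after observing it lands in $I_C$ and that $\otimes_C$ agrees with $\otimes_{I_C}$), whereas the paper verifies that $\iota\ot 1_N$ and $\mu$ agree on the dense image of $M\otss N\otsc\tilde N\otss N$ in $M\otss N$.
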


\begin{proof} 
Since \BNC\ is a left-full Hilbert bimodule, there exists a $C$--$B$-correspondence $\tilde{N}$ and a $B$--$B$-correspondence isomorphism \[j: N\otsc \tilde{N}\rightarrow B, \text{ } n_1\otsc \tilde{n_2}\mapsto \pre B\<n_1, n_2\>,\] where $n_1, n_2\in N.$ Define an isomorphism  $\iota:$ \AMB $\rightarrow$ \AMMB\ by 
\[ \iota= \xi_{(r,M',B)}\left(1_{M'}\ot j\right)\left(\mu\ot 1_{\tilde{N}}\right)\left(1_M\ot j^{-1}\right)\left(\xi_{(r,M,B)}\right)^{-1},\] where $\xi_{(r,M,B)}$ and $\xi_{(r,M',B)}$ are the $A$--$B$-correspondence isomorphisms defined as in Remark~\ref{notation}.
It suffices to use elementary tensors to verify the equality  $\iota\ot 1_N$ = $\mu$. Let $m'\in M'$ and  $n_1, n_2, n_3\in N$.  Then we have 
\begin{align*}
 \left(\xi_{(r,M',B)}\ot 1_N\right) &\left(1_{M'}\ot j\ot 1_N\right)(m'\otss n_1 \otsc \tilde{n_2}\otss n_3) \\
 &=  \left(\xi_{(r,M',B)}\ot 1_N\right) m'\otss \pre B\<n_1,n_2\>\otss n_3\\
&= m'\pre B\<n_1, n_2\>\otss n_3\\
&= m'\otss n_1\<n_2, n_3\>_C. 
\end{align*}
This shows that for any $x\in M\otss N,$ and $n,n'\in N$ we have 
 \[\left(\xi_{(r,M',B)}\ot 1_N\right)\left(1_{M'}\ot j\ot 1_N\right)\left(\mu\ot 1_{\tilde{N}}\ot 1_N\right)(x\otsc \tilde{n}\otss n')=\mu(x)\<n,n'\>_C\] and 
 \[ \mu\left(\xi_{(r,M,B)}\ot 1_N\right)\left(1_M\ot j \ot 1_N\right)(x\otsc \tilde{n}\otss n')= \mu(x)\<n,n'\>_C,\]
 as desired.\end{proof}


\begin{proposition}\label{mono} Let \tn{[\BNC, $U_N$]}: \BYB $\rightarrow$\CZC\ be a morphism in \cats . If \BNC\ is a left-full Hilbert bimodule, then \tn{[\BNC, $U_N$]} is a monomorphism in \cats . 
\end{proposition}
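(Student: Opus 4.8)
The plan is to check the categorical monomorphism condition directly. Suppose $g=[\pre AM_B,U_M]$ and $h=[\pre AM'_B,U_{M'}]$ are morphisms $\pre AX_A\to\pre BY_B$ in \cats\ with
\[
[\pre BN_C,U_N]\circ g=[\pre BN_C,U_N]\circ h;
\]
I must produce an isomorphism of pairs witnessing $g=h$. By the composition rule of \cat\ this hypothesis says that the pairs $(\pre A(M\otss N)_C,U_{M\otss N})$ and $(\pre A(M'\otss N)_C,U_{M'\otss N})$ are isomorphic, where $U_{M\otss N}=(1_M\ot U_N)(U_M\ot 1_N)$ and $U_{M'\otss N}=(1_{M'}\ot U_N)(U_{M'}\ot 1_N)$. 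By Definition~\ref{morphiso} there is therefore an $A$--$C$-correspondence isomorphism $\xi\colon M\otss N\to M'\otss N$ with
\[
(\xi\ot 1_Z)\,U_{M\otss N}=U_{M'\otss N}\,(1_X\ot\xi).
\]

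Because $\pre BN_C$ is a left-full Hilbert bimodule, Lemma~\ref{cancel} applies to $\xi$ and yields an isomorphism $\iota\colon\pre AM_B\to\pre AM'_B$ with $\iota\ot 1_N=\xi$. I claim this $\iota$ witnesses $g=h$, i.e. $(\iota\ot 1_Y)U_M=U_{M'}(1_X\ot\iota)$. Substituting $\xi=\iota\ot 1_N$ into the displayed identity, I would rewrite both sides so that the isomorphism $1_{M'}\ot U_N$ factors out on the left. Using the interchange law for the balanced tensor product, $(\iota\ot 1_{N\otsc Z})(1_M\ot U_N)=(1_{M'}\ot U_N)(\iota\ot 1_{Y\otss N})$, together with the functoriality $(S\ot 1_N)(T\ot 1_N)=(ST)\ot 1_N$, the right-hand side becomes
\[
(1_{M'}\ot U_N)\big[\big((\iota\ot 1_Y)U_M\big)\ot 1_N\big]
\]
and the left-hand side becomes
\[
(1_{M'}\ot U_N)\big[\big(U_{M'}(1_X\ot\iota)\big)\ot 1_N\big].
\]
Cancelling the isomorphism $1_{M'}\ot U_N$ then leaves $\big((\iota\ot 1_Y)U_M\big)\ot 1_N=\big(U_{M'}(1_X\ot\iota)\big)\ot 1_N$.

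What remains, and what I expect to be the crux, is to cancel the outer $-\ot 1_N$; this faithfulness step is where left-fullness of $N$ is used a second time. I would prove that for a left-full Hilbert bimodule $\pre BN_C$ the operation $S\mapsto S\ot 1_N$ is injective on adjointable operators: if $D\ot 1_N=0$, then $D(w)\otss n=0$ for every $w$ and every $n\in N$, and writing $d=D(w)$, tensoring on the right by a conjugate vector $\tilde{n}'$ and applying the left-fullness isomorphism $n\otsc\tilde{n}'\mapsto\pre B\<n,n'\>$ of Lemma~\ref{cancel} gives $d\cdot\pre B\<n,n'\>=0$ for all $n,n'\in N$, whence $d\cdot B=0$ by left-fullness and $d=0$ by nondegeneracy. (Equivalently, $\<n,\<d,d\>_B\,n\>_C=0$ for all $n$ forces $\<d,d\>_B^{1/2}\in\ker\varphi_N$, and the left action of a left-full Hilbert bimodule is injective.) Applying this with $D=(\iota\ot 1_Y)U_M-U_{M'}(1_X\ot\iota)$ gives $(\iota\ot 1_Y)U_M=U_{M'}(1_X\ot\iota)$, which is precisely the commuting square of Definition~\ref{morphiso} exhibiting $g=h$. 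Hence $[\pre BN_C,U_N]$ is a monomorphism in \cats. The only genuine work is the interchange-law bookkeeping and the faithfulness of $-\ot 1_N$; regularity of the objects is not actually needed beyond situating us inside \cats.
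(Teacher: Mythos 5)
Your proposal is correct and follows essentially the same route as the paper's proof: obtain the isomorphism $\mu$ from the hypothesis, apply Lemma~\ref{cancel} to write $\mu=\iota\ot 1_N$, use the interchange law to factor out and cancel $1_{M'}\ot U_N$, and finally cancel $-\ot 1_N$ to arrive at the commuting square of Definition~\ref{morphiso}. The only (harmless) divergence is in the last step, where the paper cites Lemma~\ref{fowler} --- a left-full Hilbert bimodule is an injective correspondence, so $T\mapsto T\ot 1_N$ is isometric, hence injective --- whereas you prove the faithfulness of $S\mapsto S\ot 1_N$ by hand from left-fullness; that argument is valid, and your swap of the labels ``left-hand side'' and ``right-hand side'' in the intermediate display is immaterial since the final identity is symmetric.
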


\begin{proof}   Let [\AMB, $U_M$], [\AMMB, \UMM]: \AXA $\rightarrow$ \BYB\  be morphisms in \cats\ satisfying 
\[ \text{[\BNC, $U_N$] $\circ$ [\AMB, $U_M$] = [\BNC, $U_N$] $\circ$ [\AMMB, \UMM]}.\] Then, there exists an isomorphism $\mu: M\otss N \rightarrow M'\otss N$ with the commutative diagram 
\[
\begin{tikzcd}
X\ots M\otss N \arrow{r}{1_X\ot \mu}  \arrow[swap]{d}{(1_M\ot U_N)(U_M\ot 1_N)} & X\ots M'\otss N \arrow{d}{(1_{M'}\ot U_N)(U_{M'}\ot 1_N)} \\
M\otss N\otsc Z \arrow{r}\arrow{r}{\mu\ot 1_Z} & M'\otss N\otsc Z.
\end{tikzcd}
\]
Since \BNC\ is a left-full Hilbert bimodule, by Lemma~\ref{cancel}, there exists an isomorphism $\iota:$ \AMB $\rightarrow$ \AMMB\ such that $\mu=\iota\ot 1_N$. We aim to show that the diagram 
\[
\begin{tikzcd}
X\ots M  \arrow{r}{1_X\ot \iota}  \arrow[swap]{d}{U_M} & X\ots M'  \arrow{d}{U_{M'}} \\
M\otss Y \arrow{r}\arrow{r}{\iota\ot 1_Y} & M'\otss Y
\end{tikzcd}
\]
commutes. 

By the first diagram above, we have 
\begin{align*}
(1_{M'}\ot U_N)(U_{M'}\ot 1_N)(1_X\ot \iota\ot 1_N) &= (\iota\ot 1_N\ot 1_Z)(1_M\ot U_N)(U_M\ot 1_N)\\
&= (1_{M'}\ot U_N)(\iota\ot 1_Y\ot 1_N)(U_M\ot 1_N), 
\end{align*}
which implies the equality 
\[(U_{M'}\ot 1_N)(1_X\ot \iota\ot 1_N) = (\iota\ot 1_Y\ot 1_N)(U_M\ot 1_N).\]
Since \BNC\ is an injective correspondence, by Lemma~\ref{fowler}, we have 
\[U_{M'}(1_X\ot \iota) = (\iota\ot 1_Y)U_M,\]
completing the proof. 
\end{proof}

\begin{remark}
Let  $U_M: X\ots M \rightarrow M\otss Y$ be an $A$--$B$-correspondence isomorphism. We know by Lemma~\ref{inv} that $\ker\varphi_M$ is a positive $X$-invariant ideal of $A$. And thus we may view $KX$ as a \C-correspondence over $K$, where $K$ denotes the ideal $\ker\varphi_M.$ Then, as described in Lemma~\ref{idealmorph}, $[\pre KK_A, U_K]$: \KX $\rightarrow$ \AXA\ is a morphism in \cats\ where \mbox{$U_K(kx\otsk k')= k\ots xk'$} , for any $k,k'\in K$ and $x\in X$. 
\end{remark}

We are now ready to determine kernels in \cats .

\begin{theorem}\label{kernel} Let  \tn{[\AMB,$U_M$]:} \AXA $\rightarrow$ \BYB\ be a morphism in \cats . Let $K$ denote the kernel of the homomorphism $\varphi_M: A\rightarrow \KK(M).$  Then, the object  $\pre K(KX)_K$ paired with the morphism $[\pre KK_A, U_K]:$ \KX $\rightarrow$ \AXA\ is a kernel of \tn{[\AMB,$U_M$]}. 

\end{theorem}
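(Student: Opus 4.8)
The plan is to verify the two defining clauses of a kernel (Section~2) directly, writing $f:=[\pre AM_B,U_M]$ and $k:=[\pre KK_A,U_K]$. First I would check $f\circ k=0$. By the composition rule in \cat\ the underlying correspondence of $f\circ k$ is $\pre K(K\otimes_A M)_B$, and since the closed span of the $A$-valued inner products of $\pre KK_A$ is $\overline{K^*K}=K=\ker\varphi_M$, Proposition~\ref{zero} forces $\pre K(K\otimes_A M)_B=\pre K0_B$. Hence $f\circ k$ is the zero morphism.

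For the universal property, let $h=[\pre DW_A,U_W]\colon\pre DZ_D\to\pre AX_A$ be a morphism in \cats\ with $f\circ h=0$. The underlying correspondence of $f\circ h$ is $\pre D(W\otimes_A M)_B$, so by Proposition~\ref{zero} the hypothesis $f\circ h=0$ is \emph{equivalent} to $A_W\subseteq\ker\varphi_M=K$, where $A_W=\overline{\<W,W\>_A}$. Thus $W=\overline{WK}$, and $W$ may be regarded as a $D$--$K$-correspondence $\pre DW_K$ by \cite[Lemma~3.2]{ench}. I then set $p:=[\pre DW_K,U'_W]$, where $U'_W:=\Psi\circ U_W\colon Z\otimes_D W\to W\otimes_K(KX)$ and $\Psi\colon W\otimes_A X\to W\otimes_K(KX)$ is the canonical isomorphism determined by $w_0k\otimes_A x\mapsto w_0\otimes_K(kx)$, which is legitimate because the right $A$-action on $W$ factors through $K$. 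To confirm that $p$ is a morphism in \cats\ I note that $\pre K(KX)_K$ is regular (as recorded in the remark preceding the statement), that $\pre DW_K$ is proper since $\KK_A(W)=\KK_K(W)$ when $A_W\subseteq K$, and that the Katsura condition $J_Z\cdot W\subseteq W\cdot J_{KX}$ holds: indeed $J_Z\cdot W\subseteq W\cdot J_X$ because $h$ is a morphism, while $W\cdot J_X=\overline{WKJ_X}=W\cdot(K\cap J_X)=W\cdot J_{KX}$, using $J_{KX}=K\cap J_X$ \cite[Proposition~9.2]{katsura} together with the ideal identity $\overline{KJ_X}=K\cap J_X$.

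Next I would verify $k\circ p=h$. The underlying correspondence of $k\circ p$ is $\pre D(W\otimes_K K)_A$, and $\chi\colon W\otimes_K K\to W$, $w\otimes_K k\mapsto wk$, is an isomorphism of $D$--$A$-correspondences. It remains to show that $\chi$ intertwines the composite isomorphism $(1_W\otimes U_K)(U'_W\otimes 1_K)$ with $U_W$ in the sense of Definition~\ref{morphiso}, i.e.\ that $(\chi\otimes 1_X)\circ(1_W\otimes U_K)(U'_W\otimes 1_K)=U_W\circ(1_Z\otimes\chi)$. This is a direct but delicate computation on elementary tensors, unwinding the definitions of $U_K$, $\Psi$ and $\chi$; I expect this to be the \emph{main bookkeeping obstacle}, since the several canonical identifications ($\Psi$, $\chi$, and the $\xi_{(l,X,K)}$ hidden inside $U_K$) must be tracked and composed in exactly the right order.

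Finally, uniqueness of $p$ is automatic. The bimodule $\pre KK_A$ is a left-full Hilbert bimodule, with left inner product $\pre K\<k,k'\>=kk'^*$ whose closed span is $K$, so Proposition~\ref{mono} shows that $k=[\pre KK_A,U_K]$ is a monomorphism in \cats. Consequently any morphism $p'$ with $k\circ p'=h=k\circ p$ must equal $p$, which completes the verification that $\bigl(\pre K(KX)_K,\,[\pre KK_A,U_K]\bigr)$ is a kernel of $[\pre AM_B,U_M]$.
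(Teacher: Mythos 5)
Your proposal is correct and follows the paper's argument essentially step for step: the zero composite via Proposition~\ref{zero}, viewing the test correspondence $W$ as a $D$--$K$-correspondence because $A_W\subseteq K$, transporting the intertwiner through canonical identifications (your single map $\Psi$ packages the isomorphisms $\iota$, $j$, $\xi_{(l,X,K)}$, $\xi_{(r,KX,K)}$ that the paper composes explicitly), and uniqueness from Proposition~\ref{mono} since $\pre KK_A$ is a left-full Hilbert bimodule. The intertwining identity you defer as the ``main bookkeeping obstacle'' is exactly the elementary-tensor/Cohen--Hewitt computation the paper carries out, and it does go through as you predict (the key point being that $U_W$ is a right $A$-module map, so the extra factor of $K$ slides across); your added checks that $p$ is a morphism in \cats\ (properness of $\pre DW_K$ and the Katsura condition) are details the paper leaves implicit.
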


\begin{proof} 
We must show the following:
\begin{enumerate}
\item \tn{[\AMB, $U_M$]} $\circ$ [\KKA, $U_K$] = [0, $0_{KX,Y}$]; and 
\item assume [\CNA, $U_N$]: \CZC $\rightarrow$ \AXA\ is a morphism in \cats\ satisfying the equality \tn{[\AMB, $U_M$]} $\circ$ [\CNA, $U_N$] = [0, $0_{Z,Y}$] . Then, there exists a unique morphism [\CTK, $U_T$]: \CZC $\rightarrow$ \KX\ such that \mbox{[\KKA, $U_K$] $\circ$ [\CTK, $U_T$]=[\CNA, $U_N$].} 
\end{enumerate}

Item (1) is folklore. For (2), notice that  since $N\ots M \cong 0$ we have \mbox{$\<N,N\>_A\subset K$.}  Thus we may view \CNA\ as a $C$--$K$-correspondence, which we denote by $N'$. Moreover, we have the isomorphisms
\[ \iota: \pre CN\ots K_K \rightarrow \pre CN'_K  \hspace{2cm}  n\ots k \mapsto n\cdot k\]
and
\[ j: \pre CN'\otsk K_A \rightarrow \tn{\CNA} \hspace{2cm} n\otsk k \mapsto n\cdot k \]
for $n\in N$, $k\in K$. 
Now let $U_{N'}$ be the $C$--$K$-correspondence isomorphism
\[
\begin{tikzcd}[row sep=large, column sep=huge]
&[-50pt]\pre CZ\otsc N'_K \arrow{r}{1_Z\ot \iota^{-1}}  &\pre C Z\otsc N\ots K_K  \arrow{r}{U_N\ot 1_K}& \pre CN\ots X\ots K_K \arrow{dll}[swap]{j^{-1}\ot 1_X\ot 1_K} \\
& \pre CN'\ot_K K\ots X\ots K_K  \arrow{r}[swap]{1_{N'}\ot \xi_l\ot1_K} & \pre CN'\ot_K KX\ots K_K \arrow{r}[swap]{1_{N'}\ot \xi_r} & \pre CN'\ot_K KX_K,
\end{tikzcd}
\]
where $\xi_l$ is the $K$--$A$-correspondence isomorphism $\xi_{(l,X,K)}: K\ots X\rightarrow KX$, and $\xi_r$ is the $K$--$K$-correspondence isomorphism $\xi_{(r, KX,K)}: KX\ots K\rightarrow KX$, i.e., 
\[ U_{N'}:= [1_{N'}\ot \xi_r][1_{N'}\ot \xi_l \ot1_K][j^{-1}\ot 1_X\ot 1_K][U_N\ot 1_K][1_Z\ot\iota^{-1}].\]

We show that [$\pre C(N'\otsk K)_A$, $(1_{N'}\ot U_K)(U_{N'}\ot 1_K)$]=[\CNA, $U_N$], i.e.,  the diagram
\[
\begin{tikzcd}
Z\otsc N' \otsk K \arrow{r}{1_Z\ot j} \arrow[swap]{d}{(1_{N'}\ot U_K)(U_{N'}\ot 1_K)} & Z\otsc N \arrow{d}{U_N} \\
N'\otsk K\ots X \arrow{r}\arrow{r}{j \ot 1_X} & N\ots X
\end{tikzcd}
\]
commutes. Consider an elementary tensor  $n\ots x\ots k_1\otsk k_2$ of  \mbox{$\pre A(N\ots X\ots K\otsk K)_A$}. By Cohen-Hewitt factorization theorem we have $n=n'\cdot k'$ for some $n'\in N$ and $k'\in \<N,N\>_A\subset K$. Then,
\begin{align*}
(j \ot 1_X)(1_{N'}\ot U_K)&(1_{N'}\ot \xi_r \ot 1_K)(1_{N'}\ot \xi_l\ot 1_K\ot 1_K)(j^{-1}\ot 1_X\ot 1_K\ot 1_K)(n\ots x \ots k_1\otsk k_2)\\
&=(j \ot 1_X)(1_{N'}\ot U_K)(1_{N'}\ot \xi_r \ot 1_K)(n'\otsk k' x \ots k_1\otsk k_2)\\
&= (j \ot 1_X)(1_{N'}\ot U_K)(n'\otsk k'xk_1\otsk k_2)\\
&=  (j \ot 1_X)(n'\otsk k'\ots x k_1k_2)\\
&= n\ots x k_1k_2.
\end{align*}
On the other hand, it is not hard to seee that  
\[U_N(1_Z\ot j)(1_Z\ot \iota\ot 1_K)(U_N^{-1}\ot 1_K\ot 1_K) (n\ots x\ots k_1\otsk k_2) =n\ots xk_1k_2.\]

Uniqueness of the morphism [$\pre CN'_K$, $U_{N'}$] follows from Proposition~\ref{mono}, since \KKA\ is a left-full Hilbert bimodule. \end{proof}


We next study cokernels in \cats .

\begin{lemma}\label{epi} Let $\pre CX_B$ and $\pre CY_B$  be \C-correspondences. Let $\pre AC_C$ be the \C-correspondence where the left action is determined by the surjective map $\pi: A\rightarrow C$. If there exists an $A$--$B$-correspondence isomorphism \mbox{$U: C\otsc X \rightarrow C\otsc Y$,} then  there exists an isomorphism $V: \pre CX_B\rightarrow \pre CY_B$ such that $1_C\ot V=U$. \end{lemma}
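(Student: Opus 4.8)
The plan is to exploit the fact that $\pre AC_C$ is, as a right Hilbert $C$-module, nothing but the identity correspondence $\pre CC_C$ carrying the extra left $A$-action $a\cdot c=\pi(a)c$, so that tensoring with it does nothing beyond reinterpreting the left action through $\pi$. Concretely, I would first introduce the canonical right-Hilbert-$B$-module maps
\[
\lambda_X\colon C\otsc X\to X,\quad c\ot x\mapsto \varphi_X(c)x,
\qquad
\lambda_Y\colon C\otsc Y\to Y,\quad c\ot y\mapsto \varphi_Y(c)y.
\]
These are well defined on the balanced tensor product (since $\varphi_X(cc')x=\varphi_X(c)\varphi_X(c')x$), preserve the $B$-valued inner product because
\[
\<c\ot x,c'\ot x'\>_B=\<x,\varphi_X(c^*c')x'\>_B=\<\varphi_X(c)x,\varphi_X(c')x'\>_B,
\]
and are surjective because $X$ and $Y$ are nondegenerate over $C$; hence they are isomorphisms of Hilbert $B$-modules. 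A direct computation shows that under $\lambda_X$ the left $A$-action $a\cdot(c\ot x)=\pi(a)c\ot x$ on $C\otsc X$ becomes the action $a\cdot x=\varphi_X(\pi(a))x$ on $X$, and similarly for $Y$.

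Next I would set $V:=\lambda_Y\circ U\circ\lambda_X^{-1}\colon X\to Y$. As a composite of right-Hilbert-$B$-module isomorphisms, $V$ is itself one. To see that $V$ is a \C-correspondence isomorphism $\pre CX_B\to\pre CY_B$, I would transport the intertwining property of $U$ through $\lambda_X$ and $\lambda_Y$: since $U$ respects the left $A$-actions, the previous step yields $V(\varphi_X(\pi(a))x)=\varphi_Y(\pi(a))V(x)$ for all $a\in A$ and $x\in X$. This is the one place where surjectivity of $\pi$ is essential: because every $c\in C$ equals $\pi(a)$ for some $a\in A$, the displayed identity is precisely $V(\varphi_X(c)x)=\varphi_Y(c)V(x)$, i.e.\ $V$ intertwines the left $C$-actions.

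Finally I would verify the relation $1_C\ot V=U$. Having established that $V$ is a $C$-module map, the map $1_C\ot V\colon C\otsc X\to C\otsc Y$, $c\ot x\mapsto c\ot V(x)$, is well defined on the balanced tensor product. From $V=\lambda_Y U\lambda_X^{-1}$ one obtains $U=\lambda_Y^{-1}V\lambda_X$, and evaluating on an elementary tensor gives
\[
\lambda_Y^{-1}V\lambda_X(c\ot x)=\lambda_Y^{-1}\bigl(V(\varphi_X(c)x)\bigr)=\lambda_Y^{-1}\bigl(\varphi_Y(c)V(x)\bigr)=c\ot V(x),
\]
so $U$ and $1_C\ot V$ agree on elementary tensors and hence everywhere by linearity and continuity. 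The main, though modest, obstacle is the bookkeeping in the second paragraph: one must keep straight that $U$ only sees the $A$-action through $\pi$, and recognize that it is exactly surjectivity of $\pi$ that upgrades the $A$-equivariance of $V$ to genuine $C$-equivariance; the remainder is the standard identity-correspondence absorption $C\otsc X\cong X$.
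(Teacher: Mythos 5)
Your proof is correct and follows essentially the same route as the paper's: both arguments conjugate $U$ by the canonical absorption isomorphisms $C\otsc X\cong X$ and $C\otsc Y\cong Y$ to produce $V$, and then verify $1_C\ot V=U$ on elementary tensors. The only difference is that where you establish the $C$-equivariance of $V$ directly from the surjectivity of $\pi$, the paper outsources exactly that step to \cite[Lemma~3.3]{ench}; your inline argument is the content of that citation, so the two proofs coincide in substance.
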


\begin{proof} Consider the natural $A$--$B$-correspondence isomorphisms
\begin{align*}  
& \iota_{C,X}: C\otsc X\rightarrow X \text{} &c\otsc x\mapsto c\cdot x\\
& \iota_{C,Y}: C\otsc Y\rightarrow Y \text{} &c\otsc y\mapsto c\cdot y
\end{align*}
\cite[Lemma~3.3]{ench} tells us that the map $\iota_{C,Y}\circ U\circ \iota_{C,X}^{-1}: \pre AX_B\rightarrow \pre AY_B$ preserves the left $C$-module structure and thus, provides an isomorphism $\pre CX_B \rightarrow \pre CY_B$.  We observe that \mbox{$1_C\ot  \iota_{C,Y}U\iota_{C,X}^{-1}=U$}:  let $c,c'\in C, x\in X.$  Notice that since $U(c'\otsc x)=\lim_{n\rightarrow\infty}\sum_{i=1}^{N_n} c_i^n\otsc y_i^n$ for $c_i^n \in C, y_i^n\in Y$, we have 
\begin{align*}
(1_C\ot\iota_{C,Y})(1_C\ot U)(c\otsc c'\otsc x)&=c\otsc \lim_{n\rightarrow\infty}\sum_{i=1}^{N_n} c_i^n\cdot y_i^n\\
&= \lim_{n\rightarrow\infty}\sum_{i=1}^{N_n}  cc_i^n\otsc y_i^n \\
&= U(1_C\ot \iota_{C,X})(c\otsc c'\otsc x),
\end{align*} as desired. \end{proof}

\begin{proposition}\label{epic} Let \AXA\  be a regular \C-correspondence and let $I$ be an $X$-invariant ideal of $A$. Then, $[\pre A(A/I)_{A/I}, U_{A/I}]$: \AXA $\rightarrow \pre {A/I}(X/XI)_{A/I}$ is an epimorphism in \cats. 
\end{proposition}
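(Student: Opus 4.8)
The plan is to establish the epimorphism property by a cancellation argument dual to the one used for monomorphisms in Proposition~\ref{mono}: there a left-full Hilbert bimodule on the right was cancelled via Lemma~\ref{cancel}, and here the surjection correspondence $\pre A(A/I)_{A/I}$ on the left will be cancelled via Lemma~\ref{epi}. Suppose $[\pre{A/I}N_C, U_N]$ and $[\pre{A/I}N'_C, U_{N'}]$ are morphisms $\pre{A/I}(X/XI)_{A/I}\to \pre CZ_C$ in \cats\ with
\[ [\pre{A/I}N_C, U_N]\circ[\pre A(A/I)_{A/I}, U_{A/I}] = [\pre{A/I}N'_C, U_{N'}]\circ[\pre A(A/I)_{A/I}, U_{A/I}]. \]
By the composition rule these morphisms are represented by the pairs $(\pre A((A/I)\otsp N)_C,\, (1_{A/I}\ot U_N)(U_{A/I}\ot 1_N))$ and $(\pre A((A/I)\otsp N')_C,\, (1_{A/I}\ot U_{N'})(U_{A/I}\ot 1_{N'}))$. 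By Definition~\ref{morphiso} their equality supplies an $A$--$C$-correspondence isomorphism $\mu\colon (A/I)\otsp N\to (A/I)\otsp N'$ satisfying
\[ (1_{A/I}\ot U_{N'})(U_{A/I}\ot 1_{N'})(1_X\ot\mu) = (\mu\ot 1_Z)(1_{A/I}\ot U_N)(U_{A/I}\ot 1_N). \]

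Since the left $A$-action on $A/I$ factors through the surjection $p\colon A\to A/I$, Lemma~\ref{epi} applies (with $A/I,\,p,\,N,\,N'$ in the roles of $C,\pi,X,Y$, respectively) and produces a unique $(A/I)$--$C$-correspondence isomorphism $V\colon N\to N'$ with $\mu = 1_{A/I}\ot V$. This $V$ is my candidate for the isomorphism witnessing $[\pre{A/I}N_C, U_N] = [\pre{A/I}N'_C, U_{N'}]$, so by Definition~\ref{morphiso} it remains only to verify the single identity
\[ (V\ot 1_Z)U_N = U_{N'}(1_{X/XI}\ot V). \]

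To obtain this I would substitute $\mu = 1_{A/I}\ot V$ into the displayed identity for $\mu$ and move the outermost factor $1_{A/I}\ot(-)$ to the front on each side. Reassociating $1_X\ot\mu$ as $1_{X\ots(A/I)}\ot V$ and applying the interchange law $(U_{A/I}\ot 1_{N'})(1_{X\ots(A/I)}\ot V) = (1_{(A/I)\otsp(X/XI)}\ot V)(U_{A/I}\ot 1_N)$, both sides reduce to the common shape $(1_{A/I}\ot f)(U_{A/I}\ot 1_N)$, with $f = U_{N'}(1_{X/XI}\ot V)$ on the left-hand side and $f = (V\ot 1_Z)U_N$ on the right-hand side. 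Cancelling the isomorphism $U_{A/I}\ot 1_N$ on the right gives $1_{A/I}\ot[U_{N'}(1_{X/XI}\ot V)] = 1_{A/I}\ot[(V\ot 1_Z)U_N]$, and cancelling the leading $1_{A/I}\ot(-)$ through naturality of the nondegeneracy isomorphisms $\iota_{A/I,W}\colon (A/I)\otsp W\to W$ yields the required identity.

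The main obstacle is the tensor bookkeeping in this last step: keeping straight which balanced products are taken over $A$ and which over $A/I$, verifying the interchange law, and justifying the two cancellations. The first cancellation is immediate because $U_{A/I}\ot 1_N$ is a correspondence isomorphism, while the second is the faithfulness of the functor $(A/I)\otsp(-)$ on morphisms, which holds because $\iota_{A/I,-}$ is a natural isomorphism on the nondegenerate left $A/I$-modules involved (compare \cite[Lemma~3.3]{ench}).
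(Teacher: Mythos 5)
Your proposal is correct and follows essentially the same route as the paper's proof: extract the isomorphism $\mu$ from the equality of compositions, split it as $\mu = 1_{A/I}\ot V$ via Lemma~\ref{epi}, then use the interchange law and cancel $U_{A/I}\ot 1$ and the leading $1_{A/I}\ot(-)$ to get the intertwining identity required by Definition~\ref{morphiso}. The only cosmetic difference is in the last cancellation, where you invoke faithfulness of the functor $(A/I)\otsp(-)$ (via the natural isomorphism $(A/I)\otsp W\cong W$) while the paper appeals to $[\pre A(A/I)_{A/I}]$ being an epimorphism in \enchilada; these rest on the same underlying fact, and your phrasing is if anything the more precise one, since equality of specific maps (not just of isomorphism classes) is what is needed.
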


\begin{proof} Assume there exist morphisms [\MI, $U_M$], [\NI, $U_N$]: $\pre {A/I}(X/XI)_{A/I}\rightarrow$\BYB\ in \cat\ such that 
\[\text{[\MI, $U_M$]$\circ$ [\AI, $U_{A/I}$] = [\NI, $U_N$]$\circ$ [\AI, $U_{A/I}$]}.\]
Then, there exists an $A$--$B$-correspondence isomorphism \[\mu: A/I \otsai M \rightarrow A/I\otsai N\] making the diagram
\[
\begin{tikzcd}
X\ots A/I\otsai M \arrow{r}{1_X\ot \mu}  \arrow[swap]{d}{(1_{A/I}\ot U_M)(U_{A/I}\ot 1_M)} & X\ots A/I \otsai N \arrow{d}{(1_{A/I}\ot U_N)(U_{A/I}\ot 1_N)} \\
A/I \otsai M\otss Y \arrow{r}\arrow{r}{\mu\ot 1_Y} & A/I\otsai N\ots Y
\end{tikzcd}
\]
commute.

Since the \C-correspondence \AI\ comes from the surjective homomorphism $A\rightarrow A/I$, by Lemma~\ref{epi},  there exists an isomorphism $\xi:$ \MI $\rightarrow$ \NI\ such that $\mu= 1_{A/I}\ot \xi$. Then, by the diagram above, we have
\begin{align*}
(1_{A/I}\ot\xi\ot 1_Y)(1_{A/I}\ot U_M)(U_{A/I}\ot 1_M) &= (1_{A/I}\ot U_N)(U_{A/I}\ot 1_N)(1_X\ot 1_{A/I}\ot\xi)\\
&= (1_{A/I}\ot U_N)(1_{A/I}\ot 1_{X/XI}\ot\xi)(U_{A/I}\ot 1_M),
\end{align*}
which means $1_{A/I}\ot (\xi\ot 1_Y)U_M = 1_{A/I}\ot U_N(1_{X/XI}\ot\xi)$. Since  [\AI] is an epimorphism in \enchilada , we may now conclude the equality  \mbox{$(\xi\ot 1_Y)U_M=U_N(1_{X/XI}\ot\xi)$,} which implies [\MI, $U_M$]=[\NI, $U_N$]. \end{proof}

\begin{theorem} Let  \tn{[\AMB,$U_M$]}: \AXA $\rightarrow$ \BYB\ be a morphism in \cats. If $B_M$ is a $Y$-invariant ideal of $B$, then a cokernel of  \tn{[\AMB,$U_M$]}  is $[\pre {B}(B/B_M)_{B/B_M}, U_{B/B_M}]$: \BYB $\rightarrow$ $\pre {B/B_M}(Y/YB_M)_{B/B_M}$. 
\end{theorem}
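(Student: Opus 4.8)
The plan is to verify the two defining properties of a cokernel for the morphism $c := [\pre B(B/B_M)_{B/B_M}, U_{B/B_M}]$, abbreviating the given morphism by $f := [\pre AM_B, U_M]$; write $p\colon B\to B/B_M$ and $q\colon Y\to Y/YB_M$ for the quotient maps. First I would record that the stated target is a legitimate object of \cats\ and that $c$ is a legitimate morphism: since $B_M$ is $Y$-invariant and \BYB\ is regular, Remark~\ref{quotient} shows $\pre{B/B_M}(Y/YB_M)_{B/B_M}$ is regular and that $c$ is a well-defined morphism of \cats; moreover, by Proposition~\ref{epic} (applied with $B$, $Y$, $B_M$ in place of $A$, $X$, $I$), $c$ is in fact an epimorphism, a fact I will reuse for uniqueness.

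To see $c\circ f = 0$, note that this composite is the class of the $A$--$(B/B_M)$-correspondence $\pre A(M\otss(B/B_M))_{B/B_M}$. By Proposition~\ref{zero} this correspondence vanishes precisely when $B_M\subset\ker\varphi_{B/B_M}$; but the left action of $B$ on $B/B_M$ is multiplication through $p$, so $\ker\varphi_{B/B_M}=B_M$ and the inclusion is trivial. Hence $c\circ f = 0$.

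For the universal property, suppose $h := [\pre BN_D, U_N]\colon \pre BY_B\to\pre DZ_D$ is a morphism in \cats\ with $h\circ f = 0$. Then $M\otss N\cong 0$, so Proposition~\ref{zero} gives $B_M\subset\ker\varphi_N$; consequently the left $B$-action on $N$ factors through $p$, and I regard $N$ as a $(B/B_M)$--$D$-correspondence $\tilde N$. It is proper and nondegenerate because $N$ is, and since $Y/YB_M$ is regular we have $J_{Y/YB_M}=B/B_M$, so the morphism condition to be checked is $(B/B_M)\cdot\tilde N = B\cdot N = J_Y\cdot N\subset N\cdot J_Z$, which holds because $h$ is a morphism. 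The crucial identification is the natural map $\rho\colon Y\otss N\to (Y/YB_M)\otimes_{B/B_M}\tilde N$ sending $y\otss n\mapsto q(y)\otimes n$; using that $\langle q(y),q(y')\rangle_{B/B_M}=p(\langle y,y'\rangle_B)$ and that the left $(B/B_M)$-action on $\tilde N$ pulls back along $p$ to $\varphi_N$, a direct computation shows $\rho$ preserves inner products and has dense range, hence is an isomorphism of $B$--$D$-correspondences. Setting $U_T := U_N\circ\rho^{-1}$ then produces a candidate factoring morphism $\tilde h := [\pre{B/B_M}\tilde N_D, U_T]\colon \pre{B/B_M}(Y/YB_M)_{B/B_M}\to\pre DZ_D$.

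It remains to verify $\tilde h\circ c = h$ and uniqueness. The composite $\tilde h\circ c$ is the class of $\pre B((B/B_M)\otimes_{B/B_M}\tilde N)_D$; the nondegeneracy isomorphism $(B/B_M)\otimes_{B/B_M}\tilde N\cong N$ identifies the underlying correspondences, and one checks on elementary tensors that the accompanying isomorphisms agree, i.e. that the commuting square of Definition~\ref{morphiso} holds. I expect this last verification to be the main obstacle: it is the bookkeeping dual to the diagram chase in the proof of Theorem~\ref{kernel}, and requires simultaneously tracking $U_{B/B_M}$, $U_T=U_N\circ\rho^{-1}$, and the canonical identifications threading through the balanced tensor products. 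Finally, uniqueness is immediate: since $c$ is an epimorphism in \cats, any two morphisms composing with $c$ to give $h$ must coincide.
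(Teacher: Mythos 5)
Your proof follows the paper's skeleton almost exactly: the zero-composite check via Proposition~\ref{zero}, viewing $N$ as a $(B/B_M)$--$D$-correspondence once $B_M\subset\ker\varphi_N$ is known (your $\tilde N$ is the paper's $N'$), producing a factoring morphism, and deducing uniqueness from the epimorphism property of Proposition~\ref{epic}. (Your first two paragraphs are in fact more detailed than the paper, which dismisses the zero-composite item as ``easy to verify.'') The one place you genuinely diverge is the construction of the factoring isomorphism, and it is precisely the place you flag as ``the main obstacle'' and do not carry out --- so this is a gap, although a fillable one. The paper sidesteps the problem entirely: instead of defining the isomorphism directly, it applies Lemma~\ref{epi} to the composite $B$--$D$-correspondence isomorphism
\[
(\xi^{-1}\ot 1_Z)\,U_N\,(1_Y\ot\xi)\,(U_{B/B_M}^{-1}\ot 1_{\tilde N}):\ B/B_M\otsmm Y/YB_M\otsmm \tilde N\longrightarrow B/B_M\otsmm \tilde N\ot_D Z,
\]
obtaining $U_{\tilde N}$ characterized by $1_{B/B_M}\ot U_{\tilde N}$ being equal to this composite. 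The commuting square of Definition~\ref{morphiso} is then literally this defining equation with the invertible maps moved to the other side; no elementary-tensor computation is needed. That is the entire role of Lemma~\ref{epi} in the paper's proof of this theorem.

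With your $U_T:=U_N\circ\rho^{-1}$ the square is true, but it requires an argument, and the argument must reuse the vanishing hypothesis a second time --- it is not mere bookkeeping. Chasing $y\otss p(b)\otimes n$ around the square: the top-right path gives $U_N(yb\otss n)$, while the left-bottom path gives $U_N(b'y'\otss n)$, where $b'\in B$, $y'\in Y$ arise from $U_{B/B_M}=i_2^{-1}\circ i_1$, so that all one knows is $q(yb)=p(b')q(y')=q(b'y')$. These two outputs agree because $yb-b'y'\in \ker q=YB_M$ and $YB_M\otss N=0$ (for $m\in B_M$ one has $ym\otss n=y\otss\varphi_N(m)n=0$ since $B_M\subset\ker\varphi_N$); without this observation the two paths cannot be matched. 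So either write out this short computation, or replace your direct construction of $U_T$ by the paper's Lemma~\ref{epi} device, which builds the commutativity into the definition. Your uniqueness argument is exactly the paper's.
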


\begin{proof}
We must show the following:
\begin{enumerate}
\item  $[\pre B(B/B_M)_{B/B_M}, U_{B/B_M}]$  $\circ$ \tn{[\AMB, $U_M$]}  = [0, $0_{X,Y/YB_M}$]; and 
\item assume [\BNC, $U_N$]: \BYB $\rightarrow$ \CZC\  is a morphism in \cats\ satisfying the equality  \tn{[\BNC, $U_N$]} $\circ$ [\AMB, $U_M$] = [0, $0_{X,Z}$] . Then, there exists a unique morphism [$\pre {B/B_M}T_C$, $U_T$]: \YBM $\rightarrow$ \CZC\ satisfying the equality \mbox{[\BNC, $U_N$]$=$[$\pre {B/B_M}T_C$, $U_T$] $\circ$ [\BM, $U_{B/B_M}$].} 
\end{enumerate}

The first item is easy to verify. Let  [\BNC, $U_N$]: \BYB $\rightarrow$ \CZC\  be a morphism described as in the second item. Then \BNC\ can be viewed as a $B/B_M$--$C$-correspondence \cite[Lemma~3.3]{ench}, which we denote by $N'$. Now, let $\xi$ be the \C-correspondence isomorphism $\pre B(B/B_M\otsmm N')_C \rightarrow$ \BNC ; and consider the $B$--$C$-correspondence isomorphism
\[(\xi^{-1}\ot 1_Z)U_N(1_Y\ot \xi)(U_{B/B_M}^{-1}\ot 1_{N'}):  B/B_M\otsmm Y/YB_M\otsmm N'\rightarrow B/B_M\otsmm N'\otsc Z.\]
By Lemma~\ref{epi} there exists an isomorphism
\[U_{N'}:   \pre {B/B_M}(Y/YB_M\otsmm N')_C \rightarrow  \pre {B/B_M} (N'\otsc Z)_C\]
such that $1_{B/B_M}\ot U_{N'} = (\xi^{-1}\ot 1_Z)U_N(1_Y\ot \xi)(U_{B/B_M}^{-1}\ot 1_{N'})$. 
One can now see that the diagram
\[
\begin{tikzcd}
Y\otss B/B_M\otsmm   N' \arrow{r}{1_Y\ot \xi} \arrow[swap]{d}{(1_{B/B_M}\ot U_{N'})(U_{B/B_M}\ot 1_{N'})} & Y\otss N \arrow{d}{U_N} \\
B/B_M\otsmm N'\otsc Z  \arrow{r}\arrow{r}{\xi\ot 1_Z} & N\otsc Z
\end{tikzcd}
\]
commutes. The uniqueness of  [$\pre {B/B_M}N'_C$, $U_{N'}$] follows from Proposition~\ref{epic}.\end{proof}


Let [\AXB] be a morphism in \enchilada . A kernel of [\AXB] is the pair $(K, [\pre KK_A])$, where $K$ denotes the kernel of $\varphi_X: A\rightarrow \LL(X)$ \cite[Theorem~3.11]{ench}. A cokernel of [\AXB] is the pair $(B/B_X, [\pre B(B/B_X)_{B/B_X}])$ \cite[Corollary~3.12]{ench}.

\begin{definition}\label{def-ench}
A sequence $0\rightarrow A\xrightarrow{[\pre AX_B]} B \xrightarrow {[\pre BY_C]}C\rightarrow 0$ in \enchilada\ is \emph{exact}  if the pair $(A,[\pre AX_B])$ is a kernel of $[\pre BY_C]$ and the pair $(B, [\pre BY_C])$ is a cokernel of $[\pre AX_B]$. \end{definition}

\begin{proposition}\label{exact-enchilada}
A sequence $0\rightarrow A\xrightarrow{[\pre AX_B]} B \xrightarrow {[\pre BY_C]}C \rightarrow 0$ in \enchilada\ is exact if and only if the following three holds. 

\begin{enumerate}[\normalfont(1)]

\item $\pre AX_B$ is a left-full Hilbert bimodule;

\item $B_X=K$, where $K$ denotes the kernel of $\varphi_Y: B\rightarrow\LL(Y)$.

\item Hilbert $C$-module $Y$ viewed as a $B/K$-- $C$-correspondence $\pre {B/K}Y'_C$ is an \ibm.

\end{enumerate}

\end{proposition}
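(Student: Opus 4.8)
The plan is to unwind Definition~\ref{def-ench} into its two halves---that $(A,[\pre AX_B])$ be a kernel of $[\pre BY_C]$ and that $(B,[\pre BY_C])$ be a cokernel of $[\pre AX_B]$---and to treat each half by comparing it with the canonical kernel and cokernel recalled just before the definition. Write $K=\ker\varphi_Y$, so that the kernel of $[\pre BY_C]$ is $(K,[\pre KK_B])$ and the cokernel of $[\pre AX_B]$ is $(B/B_X,[\pre B(B/B_X)_{B/B_X}])$. Since kernels and cokernels are unique up to isomorphism and a morphism of \enchilada\ is an isomorphism exactly when its underlying correspondence is an \ibm\ \cite[Proposition~2.6]{nat}, I would reformulate the two halves as follows: $(A,[\pre AX_B])$ is a kernel of $[\pre BY_C]$ iff there is an $A$--$K$ \ibm\ $W$ with $[\pre KK_B]\circ[\pre AW_K]=[\pre AX_B]$, and $(B,[\pre BY_C])$ is a cokernel of $[\pre AX_B]$ iff there is a $B/B_X$--$C$ \ibm\ $V$ with $[\pre{B/B_X}V_C]\circ[\pre B(B/B_X)_{B/B_X}]=[\pre BY_C]$.

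First I would treat the kernel half. The crucial point is the identification $\pre A(W\otsk K)_B\cong\pre AW_B$, in which the right-hand side is $W$ regarded as an $A$--$B$ correspondence with its $K$-valued inner product viewed inside $B$; the isomorphism is $w\otsk k\mapsto w\cdot k$, and it is here that one uses that $K$ is an ideal of $B$, so that the inner products of $\pre KK_B$ fill $K$ and those of $W\otsk K$ land in $K$. With this identification the kernel half becomes the assertion that $\pre AX_B$ is isomorphic to an $A$--$K$ \ibm, equivalently that $X$ is a left-full Hilbert bimodule with $B_X=K$; this is precisely conditions (1) and (2). The required vanishing $[\pre BY_C]\circ[\pre AX_B]=0$ is automatic, since $B_X\subset\ker\varphi_Y$ gives $\pre A(X\otss Y)_C\cong\pre A0_C$ by Proposition~\ref{zero}.

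Next I would treat the cokernel half by the dual computation $\pre B((B/B_X)\otimes_{B/B_X}V)_C\cong\pre BV_C$, where the right-hand side is $V$ viewed as a $B$--$C$ correspondence through the quotient map $B\to B/B_X$ (the isomorphism being $\bar b\otimes v\mapsto\bar b\cdot v$, using nondegeneracy of the left $B/B_X$-action on $V$). Thus the cokernel half asserts that $\pre BY_C$ is isomorphic to a $B/B_X$--$C$ \ibm\ whose left action factors through $B\to B/B_X$. As an \ibm\ is faithful on the left, the induced $B/B_X$-action on $Y$ must be injective, forcing $\ker\varphi_Y=B_X$ and hence condition (2) again; and the remaining content is that $Y$, as a $B/\ker\varphi_Y$--$C$ correspondence, is an \ibm, which is condition (3). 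Combining the two halves, exactness is equivalent to (1)--(2) together with (2)--(3), i.e.\ to (1), (2), and (3) jointly.

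The main obstacle I anticipate is making the two tensor-product identifications genuinely rigorous: one must check on elementary tensors that $w\otsk k\mapsto w\cdot k$ and $\bar b\otimes v\mapsto\bar b\cdot v$ preserve inner products and intertwine the bimodule actions, and that they are surjective, which relies on Cohen--Hewitt factorization and nondegeneracy of the relevant actions. The second delicate point is extracting the sharp equality $B_X=K$ rather than merely the inclusion $B_X\subseteq K$: on the kernel side this comes from right-fullness of the \ibm\ $W$ over $K$, and on the cokernel side from left-faithfulness of an \ibm; it is reassuring, and a useful consistency check, that both halves independently yield condition (2).
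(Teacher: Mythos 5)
Your proposal is correct and takes essentially the same route as the paper: both arguments compare $[\pre AX_B]$ and $[\pre BY_C]$ against the canonical kernel $(K,[\pre KK_B])$ and cokernel $(B/B_X,[\pre B(B/B_X)_{B/B_X}])$ recalled from \cite{ench}, invoke uniqueness of kernels/cokernels up to isomorphism together with the fact that isomorphisms in \enchilada\ are exactly \ibm s \cite[Proposition~2.6]{nat}, and read conditions (1)--(3) off the resulting factorizations. The only cosmetic difference is that the paper cites \cite[Theorem~3.9]{ench} and \cite[Proposition~3.11]{ench} for the identifications $\pre AX_B\cong \pre A(X'\otsk K)_B$ and $\pre BY_C\cong \pre B((B/K)\otimes_{B/K}Y')_C$, which you instead verify directly via the maps $w\otsk k\mapsto w\cdot k$ and $\bar b\otimes v\mapsto \bar b\cdot v$.
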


\begin{proof}

Assume we have (1)-(3). In \enchilada, we know that kernel of [\BYC] is the pair $(K, [\pre KK_B])$, where $K$ denotes the kernel of $\varphi_Y: B\rightarrow\LL(Y)$. On the other hand,  item (2) implies that [\BYC]$\circ$[\AXB]=[$\pre A0_C$]. Then, by the universal property of kernels there exists a morphism from $A$ to $K$ which [\AXB] factors through. As shown in  \cite[Theorem~3.9]{ench} this unique morphism is [$\pre AX'_{K}$] where $X$ is just $X'$ viewed as an $A$--$K$-correspondence. Since $\pre AX'_{K}$ is an \ibm\ we have that  [$\pre AX'_{K}$] is an isomorphism in \enchilada. It remains to show that  [\BYC] is a cokernel of [\AXB]. We know that a cokernel of  [\AXB] is [$\pre BB/K_{B/K}$], and since  [\BYC]$\circ$[\AXB]=[$\pre A0_C$], by the universal property of cokernels there exists a unique morphism which [\BYC] factors through. As shown in \cite[Proposition~3.11]{ench}, this unique morphism is [$\pre {B/K}Y'_C$], which is an isomorphism in \enchilada\ by item (3). 

For the other direction, assume [\AXB] is a kernel of [\BYC] and [\BYC] is a cokernel of [\AXB]. Since [\AXB] is a kernel of [\BYC], the correspondence $\pre AX_K$ in the kernel factorization $\pre AX_B\cong \pre A X\otsk K_B$ must be an \ibm, which means \AXB\ is a left-full Hilbert bimodule, giving us item (1). Moreover, $\pre AX_K$ being an \ibm\ implies that $B_X=K$, which proves item (2).  Since  [\BYC] is a cokernel of [\AXB], the correspondence $\pre {B/K}Y'_C$ in the cokernel factorization  $\pre BB/K\otimes_{B/K} Y'_C \cong \pre BY_C$ must be an \ibm, concluding the proof. \end{proof}


\begin{definition}\label{def-eccor}
A sequence  \[0\rightarrow  \pre AX_A \xrightarrow{[\pre AM_B, U_M]} \pre BY_B \xrightarrow{[\pre BN_C, U_N]} \pre CZ_C \rightarrow 0 \] in \cats\ is called \emph{exact}  if the pair $\left(\pre AX_A, [\pre AM_B, U_M]\right)$ is a kernel of the morphism $[\pre BN_C, U_N]: \pre BY_B\rightarrow \pre CZ_C$; and the pair $\left(\pre BY_B, [\pre BN_C, U_N]\right)$ is a cokernel of the morphism $[\pre AM_B, U_M]: \pre AX_A\rightarrow \pre BY_B.$
\end{definition}

Note that $[\pre AM_B, U_M]$ being a kernel of $[\pre BN_C, U_N]$ in the sequence above implies that $B_M=\ker\varphi_N$. Then, Lemma~\ref{inv} allows us to conclude that $B_M$ is a $Y$-invariant ideal of $B$, and thus, cokernel of $[\pre AM_B, U_M]$ exists.

\begin{corollary}\label{exact-ECCOR} A sequence 
\[ 0\rightarrow \pre AX_A \xrightarrow{[\pre AM_B, U_M]} \pre BY_B \xrightarrow{[\pre BN_C, U_N]} \pre CZ_C \rightarrow 0 \]
is exact in \cats\ if and only if  the following holds.
\begin{enumerate}[\normalfont(1)]
\item $\pre AM_B$ is a left-full Hilbert bimodule;
\item $B_M=K$, where $K$ denotes the kernel of $\varphi_N: B\rightarrow \LL(N)$;
\item Hilbert $C$-module $N$ viewed as a $B/K$-- $C$-correspondence $\pre {B/K}N'_C$ is an imprimitivity bimodule.
\end{enumerate}
\end{corollary}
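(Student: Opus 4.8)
The plan is to deduce Corollary~\ref{exact-ECCOR} from Proposition~\ref{exact-enchilada} by transporting the kernel--cokernel characterization of exactness in \enchilada\ back into \cats\ along the kernel and cokernel descriptions already established. By Definition~\ref{def-eccor}, exactness of the sequence means precisely that $[\pre AM_B, U_M]$ is a kernel of $[\pre BN_C, U_N]$ and $[\pre BN_C, U_N]$ is a cokernel of $[\pre AM_B, U_M]$. The first thing I would do is unpack the kernel half using Theorem~\ref{kernel}: the kernel of $[\pre BN_C, U_N]$ is built from $K=\ker\varphi_N$, so for $[\pre AM_B, U_M]$ to \emph{be} a kernel, the unique comparison morphism from $\pre AX_A$ into $\pre K(KY)_K$ must be an isomorphism in \cats. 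I expect this to force $\pre AM_B$ to be a left-full Hilbert bimodule and to force $B_M=K$, giving items (1) and (2); the identity $B_M=K$ is exactly the condition noted in the paragraph after Definition~\ref{def-eccor}, and it is what guarantees the cokernel exists.

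Next I would analyze the cokernel half. Granting $B_M=K$, the cokernel of $[\pre AM_B, U_M]$ is $[\pre B(B/B_M)_{B/B_M}, U_{B/B_M}]:\pre BY_B\rightarrow \pre {B/B_M}(Y/YB_M)_{B/B_M}$ by the cokernel theorem, and Proposition~\ref{epic} tells us this quotient morphism is an epimorphism in \cats. For $[\pre BN_C, U_N]$ to be a cokernel, the induced comparison morphism from $\pre {B/B_M}(Y/YB_M)_{B/B_M}$ to $\pre CZ_C$ must be an isomorphism in \cats; viewing $N$ as a $B/K$--$C$-correspondence $\pre {B/K}N'_C$, this isomorphism condition is equivalent to $\pre {B/K}N'_C$ being an imprimitivity bimodule, which is item (3). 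Conversely, if (1)--(3) hold, I would run these same two factorizations forward: item (2) gives the compatibility $[\pre BN_C,U_N]\circ[\pre AM_B,U_M]=0$ needed to invoke the universal properties, items (1) and (2) make the kernel comparison an imprimitivity bimodule (hence an isomorphism in \cats), and item (3) makes the cokernel comparison an isomorphism, establishing the two universal properties of Definition~\ref{def-eccor}.

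The guiding principle is that the argument is a direct structural lift of the proof of Proposition~\ref{exact-enchilada}: the same three conditions appear, and the role played there by the kernel factorization $\pre AX_B\cong \pre AX\otsk K_B$ and the cokernel factorization $\pre BB/K\otimes_{B/K}Y'_C\cong \pre BY_C$ is played here by the factorizations through $[\pre KK_A,U_K]$ from Theorem~\ref{kernel} and through $[\pre B(B/B_M)_{B/B_M},U_{B/B_M}]$ from the cokernel theorem. The monomorphism and epimorphism results, Proposition~\ref{mono} and Proposition~\ref{epic}, supply the uniqueness of the comparison morphisms and hence the faithfulness of the translation between ``is an isomorphism in \cats'' and ``is an imprimitivity bimodule.''

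The main obstacle I anticipate is the bookkeeping at the level of the paired morphisms $(\cdot\,,U_{(\cdot)})$ rather than at the level of the underlying correspondences: one must check that when the comparison correspondence is an imprimitivity bimodule, the accompanying correspondence isomorphism $U$ is exactly the data making the pair an isomorphism in \cats, and that the kernel/cokernel universal properties produce morphisms in \cats\ (so that the factoring correspondences inherit properness and the relevant Katsura-ideal condition). Concretely, the delicate point is verifying that the isomorphism supplied abstractly by the universal property coincides with the canonical comparison correspondence identified in Theorem~\ref{kernel} and the cokernel theorem, so that ``isomorphism in \cats'' can be read off as ``imprimitivity bimodule'' via the characterization of isomorphisms in \enchilada\ recorded after \cite[Proposition~2.6]{nat}. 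Once that identification is in place, items (1)--(3) fall out by matching each condition in Proposition~\ref{exact-enchilada} to its \cats-level counterpart.
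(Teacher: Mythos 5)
Your proposal is correct and takes essentially the same route as the paper: the paper explicitly omits this proof, saying only that it ``can be shown by following the proof of Proposition~\ref{exact-enchilada},'' and your argument is exactly that adaptation, with the kernel and cokernel comparison morphisms supplied by Theorem~\ref{kernel} and the cokernel theorem, and their uniqueness by Propositions~\ref{mono} and~\ref{epic}. The identification of ``isomorphism in \cats'' with ``the underlying correspondence is an imprimitivity bimodule,'' which you flag as the delicate point, is the same mechanism the paper uses (as in the proof of Corollary~\ref{left-full}), so no gap remains.
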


We omit the proof of Corollary~\ref{exact-ECCOR} since it can be shown by following the proof of Proposition~\ref{exact-enchilada}.

\begin{theorem}\label{exactness} The restriction of the functor $\fun$  to the category \cats\ is exact. 
\end{theorem}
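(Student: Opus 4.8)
The plan is to prove exactness by showing that $\fun$ carries the intrinsic characterization of exact sequences in \cats\ (Corollary~\ref{exact-ECCOR}) to the intrinsic characterization of exact sequences in \enchilada\ (Proposition~\ref{exact-enchilada}). Concretely, suppose
\[ 0\rightarrow \pre AX_A \xrightarrow{[\pre AM_B, U_M]} \pre BY_B \xrightarrow{[\pre BN_C, U_N]} \pre CZ_C \rightarrow 0 \]
is exact in \cats. Applying $\fun$ yields the sequence
\[ 0\rightarrow \OX \xrightarrow{[\pre {\OX}(M\otss\OY)_{\OY}]} \OY \xrightarrow{[\pre {\OY}(N\otsc\OZ)_{\OZ}]} \OZ \rightarrow 0 \]
in \enchilada, and my goal is to verify conditions (1)--(3) of Proposition~\ref{exact-enchilada} for this image sequence, using conditions (1)--(3) of Corollary~\ref{exact-ECCOR} for the original one.

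First I would handle condition (1): since $\pre AM_B$ is a left-full Hilbert bimodule by hypothesis, Corollary~\ref{left-full} immediately gives that $\pre {\OX}(M\otss\OY)_{\OY}$ is a left-full Hilbert bimodule, which is exactly what condition (1) of Proposition~\ref{exact-enchilada} requires for the image sequence. Next I would address condition (2), which demands that the ideal $(\OY)_{M\otss\OY}$ equal the kernel of the left action $\varphi_{N\otsc\OZ}:\OY\to\LL(N\otsc\OZ)$. The right-hand side is governed by Proposition~\ref{kerneliso}: applied to the morphism $[\pre BN_C,U_N]$ with associated homomorphism $\sigma: \OY\to\KK(N\otsc\OZ)$, it identifies the kernel of the left action as the gauge-invariant ideal $\<\tilde\Upsilon(\ker\varphi_N)\>$ generated by the universal covariant representation of $\ker\varphi_N$ in $\OY$. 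The left-hand side, the closed span of $\OY$-valued inner products of $M\otss\OY$, should compute to the same ideal; here I would use the hypothesis $B_M=K=\ker\varphi_N$ from Corollary~\ref{exact-ECCOR}(2) together with the identification from Theorem~\ref{posinv} relating the inner-product ideal of $M\otss\OY$ to $\<\Upsilon(B_M)\>$.

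The hardest step is condition (3): I must show that the Hilbert $\OZ$-module $N\otsc\OZ$, viewed as a correspondence over $\OY/\ker$, is an \ibm. The natural strategy is to pass to the quotient and invoke Theorem~\ref{quo}. Since the original sequence is exact, the cokernel condition forces $\pre {B/K}N'_C$ to be an imprimitivity bimodule (Corollary~\ref{exact-ECCOR}(3)), so the morphism $[\pre BN_C,U_N]$ factors as an isomorphism in \cats\ composed with the quotient morphism $[\pre B(B/K)_{B/K},U_{B/K}]$; applying $\fun$ and using that $\fun$ sends isomorphisms to isomorphisms reduces condition (3) to understanding the image of the quotient morphism. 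By Theorem~\ref{quo} we have $\OY/\<\tilde\Upsilon(K)\>\cong\mathcal O_{Y/YK}$, and I expect the functor's value on the quotient morphism to realize precisely the quotient map at the level of Cuntz--Pimsner algebras, so that the $\OY/\ker$--$\OZ$-correspondence coming from $N\otsc\OZ$ is the imprimitivity bimodule induced by the isomorphism factor.

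The main obstacle I anticipate is the bookkeeping in condition (3): matching the abstract quotient $\OY/\ker\varphi_{N\otsc\OZ}$ (described via Proposition~\ref{kerneliso}) with the concrete Cuntz--Pimsner quotient $\mathcal O_{Y/YK}$ (described via Theorem~\ref{quo}), and then checking that under this identification the correspondence $N\otsc\OZ$ becomes the imprimitivity bimodule $\fun$ assigns to the isomorphism factor of $[\pre BN_C,U_N]$. Both the kernel and cokernel characterizations live at the level of the original category \cats, so the key conceptual point is that $\fun$, being a functor, respects the kernel--cokernel factorizations established in Theorem~\ref{kernel} and its cokernel counterpart; carefully tracking these factorizations through $\fun$ and comparing with the enchilada-side factorizations in Proposition~\ref{exact-enchilada} is where the real work lies, but no fundamentally new estimate is needed beyond the functoriality and the structure theorems already proved.
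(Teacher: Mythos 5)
Your proposal is correct and follows essentially the same route as the paper's proof: condition (1) via Corollary~\ref{left-full}, condition (2) by computing $\<M\otss\OY,M\otss\OY\>_{\OY}=\<\Upsilon(B_M)\>=\<\Upsilon(K)\>$ and matching it with $\ker\sigma$ via Proposition~\ref{kerneliso}, and condition (3) by applying $\fun$ to the isomorphism $[\pre{B/K}N'_C,U_{N'}]$ and identifying $\mathcal{O}_{Y/YK}$ with $\OY/\<\Upsilon(K)\>$ through Theorem~\ref{quo}. The ``bookkeeping'' you flag as the main obstacle in condition (3) is exactly the step the paper handles (rather tersely) by this same identification, so no new ideas are missing.
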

\begin{proof}
Let the sequence 
\[ 0\rightarrow \pre AX_A \xrightarrow{[\pre AM_B, U_M]} \pre BY_B \xrightarrow{[\pre BN_C, U_N]} \pre CZ_C \rightarrow 0 \]
in \cats\ be exact. Denote $\ker\varphi_N$ by $K$.  We know that $\pre AM_B$ is a left-full Hilbert bimodule, the correspondence $\pre {B/K} N'_C$ is an \ibm, and we have the equality $B_M=K$. The functor $\fun$ maps this sequence to 
\[ 0\rightarrow \OX \xrightarrow{[\pre {\OX}(M\otss\OY)_{\OY}]} \OY \xrightarrow{[\pre {\OY}(N\otsc\OZ)_{\OZ}]} \OZ\rightarrow 0.\]
By Corollary~\ref{left-full}, we have that  $\pre {\OX}(M\otss\OY)_{\OY}$ is a left-full Hilbert bimodule. Since [$\pre {B/K} N'_C, U_{N'}$]: $\pre {B/K}Y/YK_{B/K}\rightarrow \pre CZ_C$  is an isomorphism in \cats,  we have  that $\fun([\pre {B/K} N'_C, U_{N'}])=[\pre {\mathcal O_{Y/YK}}(N'\otsc \OZ)_{\OZ}]$ is an isomorphism in \enchilada, and thus $\pre {\mathcal O_{Y/YK}}( N'\otsc \OZ)_{\OZ}$ is an \ibm. It remains to prove that $\< M\otss \OY, M\otss \OY\>_{\OY}=\ker\sigma$, where  $\sigma: \OY\rightarrow \KK(N\otsc\OZ)$ is the left action homomorphism associated to the correspondence $\pre {\OY}(N\otsc\OZ)_{\OZ}.$ Let $(\Upsilon, t)$  denote the universal covariant representation of \BYB . Then, by Proposition~\ref{kerneliso}, we have 
\[\< M\otss \OY, M\otss \OY\>_{\OY} =  \<\OY, K \cdot\OY\>  = \<\Upsilon(K)\> =\ker\sigma,\]
as desired. \end{proof}

\begin{example}
By using Theorem~\ref{exactness} we can easily see Theorem~\ref{quo} and Theorem~\ref{posinv} for the case when \AXA\ is a regular correspondence: let $I$ be an $X$-invariant ideal of $A$. Then, the sequence   \[0\rightarrow  \pre IIX_I \xrightarrow{[\pre II_A, U_I]} \pre AX_A \xrightarrow{[\pre AA/I_{A/I}, U_{A/I}]} \pre {A/I}X/XI_{A/I} \rightarrow 0 \] is exact in \cats.  And thus, the sequence 
\[ 0\rightarrow \pre  {\mathcal O_{IX}}\xrightarrow{[I\ots\OX]}\OX\xrightarrow{[A/I\ot_{A/I}  \mathcal O_{X/XI}]} \mathcal O_{X/XI}\rightarrow 0\] is exact in \enchilada. 
This implies by Proposition~\ref{exact-enchilada} that $\pre {\mathcal O_{IX}}(I\ots \OX)_{\OX}$ is a \mbox{left-full} Hilbert bimodule, which means  $\pre {\mathcal O_{IX}}(I\ots \OX)_{\<\Upsilon(I)\>}$ is an \ibm, where ($\Upsilon,t$) is the universal covariant representation of \AXA . Consequently, we have  $\mathcal O_{IX}\cong \KK(I\ots\OX) \cong \Upsilon(I)\OX\Upsilon(I)$.  On the other hand, again by Proposition~\ref{exact-enchilada}, we know that $\pre {\OX/\<\Upsilon(I)\>}(A/I\ot_{A/I} \mathcal O_{X/XI})_{ \mathcal O_{X/XI}}$ is an imprimitivity bimodule. This allows us to conclude the isomorphism $\OX/\<\Upsilon(I)\>\cong \KK(A/I\ot_{A/I} \mathcal O_{X/XI})\cong  \mathcal O_{X/XI}$.

\end{example}


\end{document}